\theoremstyle{plain}
\numberwithin{equation}{section}
\newtheorem{Theorem}{Theorem}
\newtheorem{Lemma}[Theorem]{Lemma}
\newtheorem{Proposition}[Theorem]{Proposition}
\newtheorem{Corollary}[Theorem]{Corollary}
\newtheorem{Remark}[Theorem]{Remark}
\newcommand{\be}{\begin{equation}}
\newcommand{\ee}{\end{equation}}
\newcommand{\phii}{\varphi}
\title{Characterization of potential smoothness and Riesz basis
property of Hill-Scr\"odinger operators with singular periodic potentials in terms of periodic,
antiperiodic and Neumann spectra}
\author{Ahmet Batal}
\address{Sabanci University, Orhanli,
34956 Tuzla, Istanbul, Turkey, E-mail:ahmetbatal@sabanciuniv.edu}
\begin{document}

\begin{abstract}
The Hill operators $Ly=-y''+v(x)y$, considered with singular complex valued
$\pi$-periodic potentials $v$ of the form $v=Q'$ with $Q\in L^2([0,\pi]),$ and subject to periodic, antiperiodic
or Neumann boundary conditions have discrete spectra. For
sufficiently large $n,$ the disc $\{z:;|z-n^2|<n\}$ contains two periodic (if $n$
is even) or antiperiodic (if $n$ is odd) eigenvalues $\lambda_n^-$,
$\lambda_n^+$ and one Neumann eigenvalue $\nu_n$. We show that rate of decay of the sequence $|\lambda_n^+-\lambda_n^-|+|\lambda_n^+-\nu_n|$ determines the potential smoothness, and there is a basis consisting of periodic (or antiperiodic) root functions if and only if for even (respectively, odd) $n$, $
\sup_{\lambda_n^+\neq
\lambda_n^-}\{|\lambda_n^+-\nu_n|/|\lambda_n^+-\lambda_n^-| \} <
\infty. $
\end{abstract}

\maketitle

\section{Introduction}
The Schr\"odinger operator
 \be
 \label{1}
  L y= -y''+v(x)y, \quad \quad \quad x \in \mathbb{R},
 \ee
considered with a real-valued $\pi$-periodic potential $v \in
L^2([0,\pi]),$ is self-adjoint and its spectrum has a band-gap
structure, i.e., it consists of intervals $[\lambda^+_{n-1},
\lambda^-_{n}]$ separated by spectral gaps (instability zones)
$(\lambda^-_{n}, \lambda^+_{n})$, $n \in \mathbb{N}$. The Floquet
theory (e.g., see \cite{E}) shows that the endpoints $\lambda^-_{n}$,
$\lambda^+_{n}$ of these gaps are eigenvalues of the same
differential operator (\ref{1}) but considered on the interval
$[0,\pi]$ with periodic boundary conditions for even $n$ and
antiperiodic boundary conditions for odd $n$.

Hochstadt \cite{Ho1, Ho2} discovered that there is a close relation
between the rate of decay of the {\em spectral gap}
$\gamma_n=\lambda^+_n-\lambda^-_n$ and the smoothness of the
potential $v$. He proved that every finite zone potential is a
$C^\infty $-function, and moreover, \emph{if v is infinitely
differentiable then $\gamma_n$ decays faster than any power of
$1/n.$} Later several authors \cite{LP}-\cite{MT} studied this
phenomenon and showed that \emph{if $\gamma_n$ decays faster than any
power of $1/n,$ then v is infinitely differentiable}. Moreover,
Trubowitz \cite{Tr} proved that \emph{$v$ is analytic if and only if
$\gamma_n$ decays exponentially fast}.

If $v$ is a complex-valued function then the operator (\ref{1}) is
not self-adjoint and we cannot talk about spectral gaps. But
$\lambda^\pm_n $ are well defined for sufficiently large $n$ as
eigenvalues of (\ref{1}) considered on the interval $[0,\pi]$ with
periodic or antiperiodic boundary conditions, so we set again
$\gamma_n = \lambda_n^+ - \lambda_n^-$ and call it spectral gap.
Again the potential smoothness determines the decay rate of
$\gamma_n,$ but in general the opposite is not true. The decay rate
of $\gamma_n$ has no control on the smoothness of a complex valued
potential $v$ by itself as the Gasymov paper \cite{Gas} shows.

Tkachenko \cite{Tk92}--\cite{Tk94} discovered that the smoothness of
complex potentials could be controlled if one consider, together
with the spectral gap $\gamma_n,$ the deviation
$\delta^{Dir}_n=\lambda^+_n-\mu_n$, where $\mu_n$ is the closest to
$n^2$ Dirichlet eigenvalue of $L.$ He characterized in these terms
the $C^{\infty}$-smoothness and analyticity of complex valued
potentials $v.$  Moreover, Sansuc and Tkachenko \cite{ST} showed
that $v$ is in the Sobolev space $H^a$, $a\in \mathbb{N}$ if and
only if $\gamma_n $ and $\delta^{Dir}_n$ are in the weighted
sequence space $\ell_a^2=\ell^2((1+n^2)^{a/2})$.

The above results have been obtained by using Inverse Spectral Theory.
Kappeler and Mityagin \cite{KaMi01} suggested another approach based
on Fourier Analysis. To formulate their results, let us recall that
the smoothness of functions could be characterized by weights $\Omega
= (\Omega (k)),$  and the corresponding weighted Sobolev spaces are
defined by
$$
H(\Omega) = \{v(x) = \sum_{k \in \mathbb{Z}} v_k e^{2ikx}, \quad
\sum_{k \in \mathbb{Z}} |v_k|^2 (\Omega(k))^2 < \infty \}.
$$
A weight $\Omega $ is called sub-multiplicative, if $ \Omega (-k) =
\Omega (k) $ and $ \Omega (k+m) \leq \Omega (k)\Omega (m) $ for $ k,m
\geq 0. $ In these terms the main result in \cite{KaMi01} says that
if $\Omega $ is a sub-multiplicative weight, then
 \be
  \label{i-ra}
 (A) \quad v\in H(\Omega) \quad \Longrightarrow \quad  (B) \quad
   (\gamma_n) ,  \left (\delta^{Dir}_n \right ) \in
  \ell^2(\Omega).
 \ee
Djakov and Mityagin \cite{DM3,DM5, DM15} proved the inverse
implication $(B) \Rightarrow (A)$  under some additional mild
restrictions on the weight $\Omega. $  Similar results were obtained
for 1D Dirac operators (see \cite{DM7,DM6,DM15}).

The analysis in \cite{KaMi01, DM3, DM5, DM15} is carried out under
the assumption $v\in L^2([0,\pi]).$  Using the quasi-derivative
approach of Savchuk-Shkalikov \cite{SS}, Djakov and Mityagin
\cite{DM16} developed a Fourier method for studying the spectra of
$L_{Per^\pm}$ and $L_{Dir}$ in the case of periodic singular
potentials and  extended the above results. They proved that if  $v
\in H^{-1}_{per} (\mathbb{R})$ and $\Omega$ is a weight of the form
$\Omega(m)=\omega(m)/|m|$ for  $m \neq 0,$ with $\omega$ being a
sub-multiplicative weight, then  $(A) \Rightarrow (B),  $ and
conversely, if in addition $(\log \omega (n))/n$ decreases to zero,
then $(B) \Rightarrow (A)$ (see Theorem 37 in \cite{DMH}).

A crucial step in proving the implications $(A) \Rightarrow (B)$ and
$(B) \Rightarrow (A)$ is the following statement (which comes from
Lyapunov-Schmidt projection method, e.g., see Lemma 21 in
\cite{DM15}): {\em For large enough $n,$ there exists a matrix $
\begin{pmatrix} \alpha_n(z) & \beta^+_n(z) \\ \beta^-_n(z) &
\alpha_n(z)\end{pmatrix}$ such that number $\lambda = n^2 +
z$ with $|z| <n/4 $ is a periodic or antiperiodic eigenvalue if and only
if $z$ is an eigenvalue of this matrix}. The entrees
$\alpha_n(z)=\alpha_n(z;v)$ and $\beta^{\pm}_n(z)=\beta^{\pm}_n(z;v)$
are given by explicit expressions in terms of the Fourier
coefficients of the potential $v$ and depend analytically on $z$ and
$v.$

The functionals $\beta^\pm_n $ give lower and  upper bounds for
the gaps and deviations (e.g., see Theorem 29 in  \cite{DMH}): {If
$v \in H^{-1}_{per} (\mathbb{R})$ then, for sufficiently large} $n,$
\begin{equation}
\label{A1} \frac{1}{72}(|\beta_n^+(z_n^*)|+|\beta_n^-(z_n^*)|) \leq
|\gamma_n|+|\delta^{Dir}_n| \leq 58
 (|\beta_n^+(z_n^*)|+|\beta_n^-(z_n^*)|),
\end{equation}
where $z_n^*=  \frac{1}{2}(\lambda^+_n + \lambda^-_n) - n^2.$ Thus,
the implications $(A) \Rightarrow (B)$ and $(B) \Rightarrow (A)$ are
equivalent, respectively, to
\begin{equation}
\label{A2} (\tilde{A}):\quad v\in H(\Omega) \quad \Longrightarrow
\quad (\tilde{B}): \quad (|\beta_n^+(z_n^*)|+|\beta_n^-(z_n^*)|)\in
\ell^2 (\Omega),
\end{equation}
and $(\tilde{B}) \Rightarrow (\tilde{A}).$ In this way the problem of
analyzing the relationship between potential smoothness and decay
rate of the sequence $(|\gamma_n| + |\delta_n^{Dir}|)$ is reduced to
analysis of the functionals $\beta^{\pm}_n (z).$

The asymptotic behavior of $\beta^{\pm}_n (z)$ (or $\gamma_n$ and
$\delta^{Dir}_n$) plays also a crucial role in studying the Riesz
basis property of the system of root functions of the operators
$L_{Per^\pm}.$ In \cite[Section 5.2]{DM15}, it is shown (for
potentials $v \in L^2 ([0,\pi])$) that if the ratio $\beta^+_n
(z_n^*)/\beta^-_n (z_n^*) $ is not separated from $0$ or $\infty$
then the system of root functions of $L_{Per^\pm}$ does not contain a
Riesz basis (see Theorem 71 and its proof therein). Theorem 1 in
\cite{DM25} (or Theorem 2 in \cite{DM25a}) gives, for wide classes of
$L^2$-potentials,  a criterion for Riesz basis property in the same
terms. In its most general form, for singular potentials, this
criterion reads as follows (see Theorem 19 in \cite{DM28}):

{\em Criterion 1. Suppose $v\in H^{-1}_{per}(\mathbb{R});$ then the
set of root functions of $L_{Per^\pm}(v)$ contains Riesz bases if
and only if
\begin{equation}
\label{DM1} 0 < \inf_{\gamma_n\neq 0} |\beta^-_n (z_n^*)|/|\beta^+_n
(z_n^*)|, \quad \sup_{\gamma_n\neq 0} |\beta^-_n (z_n^*)|/|\beta^+_n
(z_n^*)|< \infty,
\end{equation}
where $n$ is even (respectively odd) in the case of periodic
(antiperiodic) boundary conditions.}

In \cite{GT11}  Gesztesy and Tkachenko obtained the following result.

{\em Criterion 2.   If $v \in L^2 ([0,\pi]),$ then there is a Riesz
basis consisting of root functions of the operator $L_{Per^\pm}(v)$
if and only if
\begin{equation}
\label{GT1} \sup_{\gamma_n\neq 0}|\delta^{Dir}_n|/|\gamma_n|<\infty,
\end{equation}
where $n$ is even (respectively odd) in the case of periodic
(antiperiodic) boundary conditions.}

They also noted that a similar criterion holds if (\ref{GT1}) is
replaced by
\begin{equation}
\label{GT2} \sup_{\gamma_n\neq 0}|\delta^{Neu}_n|/|\gamma_n|<\infty,
\end{equation}
where $\delta^{Neu}_n=\lambda^+_n - \nu_n $ and $\nu_n $ is the $n$th
Neumann eigenvalue.

Djakov and Mityagin \cite[Theorem 24]{DM28} proved, for potentials
$v\in H^{-1}_{per}(\mathbb{R}),$ that the conditions ({\ref{DM1}) and
(\ref{GT1}) are equivalent, so (\ref{GT1}) gives necessary and
sufficient conditions for Riesz basis property for singular
potentials as well.

On the other hand, recently the author has shown \cite{AB01} for potentials $v\in L^p([0,\pi]), $ $ p>1,$
that the Neumann version of Criterion 2 holds, and the potential
smoothness could be characterized by the decay rate of $ |\gamma_n|
+ |\delta_n^{Neu}|.$ However, whether the same is true for potentials $v\in H^{-1}_{per}(\mathbb{R}),$ even for $v\in L^1([0,\pi])$, was still unknown. In this paper we show that the answer is affirmative. More precisely, Theorem \ref{ThmNeu} and Theorem \ref{Theorem} hold. However before stating these theorems we want to explain what was the difficulty which prevent us from stating these theorems for potentials worse than $L^p([0,\pi])$ in \cite{AB01} and how we overcome this difficulty in this paper.

The main inequality we used to prove our claims in \cite{AB01} is the inequality (3.16) which states that for all $bc$ $D(P_n-P_n^0)$ are uniformly bounded as a sequence of operators from $L^2([0,\pi])$ to $L^\infty([0,\pi])$, i.e.,
\be
\label{1-M} \|D(P_n-P_n^0)\|_{L^2([0,\pi])\rightarrow L^\infty([0,\pi])} \leq M,
\ee
where $M$ is an absolute constant, $D=\frac{d}{dx}$ and $P_n$ and $P_n^0$ are the Cauchy Riesz projections into the two dimensional invariant subspaces of $L_{Per\pm}$ and $L^0_{Per\pm}$, respectively (see (\ref{PP0})).
However in the case of $v\in L^1([0,\pi])$, the author is not even sure of that $D(P_n-P_n^0)$ is a bounded operator from $L^2([0,\pi])$ to $L^\infty([0,\pi])$, so such an inequality as (\ref{1-M}) may not even exist if $v\in L^1([0,\pi])$. Hence the results of \cite{AB01} cannot be generalized further for the potentials $v\in L^1([0,\pi])$ using the same method.

On the other hand (\ref{1-M}) is used only for its corollary (see (4.17) and (4.32) in \cite{AB01}) which states that $|G_n'(0)-{G_n^0}'(0)|$ are uniformly bounded where $G_n$ is a unit vector in $Ran P_n$ and $G_n^0=P_n^0 G_n$. So actually we do not need the uniform boundedness of $G_n'-{G_n^0}'$ for all $x$ values but only for $x=0$ and not for all $bc$'s but only for $bc= Per_\pm$. Moreover it is also easy to see that even uniform boundedness is too strong and what we actually need is $|G_n'(0)-{G_n^0}'(0)|/n$  to converge to zero. Hence, in the case of potentials worse than $L^p([0,\pi])$, even if we cannot claim such an inequality as (\ref{1-M}), we may still hope to find a good estimate for the difference $G_n'(0)-{G_n^0}'(0)$. Actually one should also replace the usual derivative of $G_n$ by its quasi derivative $G_n^{[1]}$ in the case of singular potentials $v\in H^{-1}_{per}(\mathbb{R})$. This is what we do in the present paper. The main difference between \cite{AB01} and this one is that in the present paper, we do not try to find such an estimate as (\ref{1-M}) but we directly prove in Proposition \ref{mainprop} that $|G_n^{[1]}(0)-{G_n^0}'(0)|/n$ converges to zero. 

Now we state our main theorems.
\begin{Theorem}
\label{ThmNeu}  Suppose $v\in H^{-1}_{per}(\mathbb{R})$ and $\Omega$  is
a weight of the form $\Omega(m)=\omega(m)/m$ for  $m \neq 0,$ where
$\omega$ is a sub-multiplicative weight. Then
\begin{equation}
\label{B10} v \in H(\Omega) \Longrightarrow (|\gamma_n|),
(|\delta^{Neu}_n|) \in \ell^2 (\Omega);
\end{equation}
conversely, if in addition $(\log \omega (n))/n$ decreases to zero,
then
\begin{equation}
\label{B20}  (|\gamma_n|), (|\delta^{Neu}_n|) \in \ell^2
(\Omega)\Longrightarrow v \in H(\Omega).
\end{equation}
If $ \lim \frac{\log \omega (n)}{n} >0,$ (i.e. $\omega $ is of
exponential type), then
\begin{equation}
\label{B30} (\gamma_n), (\delta^{Neu}_n) \in \ell^2 (\Omega) \;
\Rightarrow \; \exists  \varepsilon >0: \; v \in H(e^{\varepsilon
|n|} ).
\end{equation}
\end{Theorem}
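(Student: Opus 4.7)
The plan is to reduce the Neumann-based characterization to the Dirichlet-based one, already established in Theorem~37 of \cite{DMH}. By the two-sided inequality (\ref{A1}), that theorem is equivalent to the reformulation (\ref{A2}) in terms of the functionals $\beta_n^\pm(z_n^*)$. Hence it suffices to establish, for sufficiently large $n$, an analogous two-sided bound
\begin{equation*}
c_1(|\beta_n^+(z_n^*)|+|\beta_n^-(z_n^*)|) \;\leq\; |\gamma_n|+|\delta^{Neu}_n| \;\leq\; c_2(|\beta_n^+(z_n^*)|+|\beta_n^-(z_n^*)|)
\end{equation*}
with absolute constants $c_1,c_2>0$. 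Once available, this forces $|\gamma_n|+|\delta^{Neu}_n|\asymp|\gamma_n|+|\delta^{Dir}_n|$, so the weighted $\ell^2(\Omega)$-conditions entering (\ref{B10}), (\ref{B20}), (\ref{B30}) transfer directly between the Neumann and Dirichlet statements.

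To obtain such a bound I work at the Cauchy--Riesz projection level. Let $G_n$ be a properly normalized root function of $L_{Per^\pm}$ associated with $\lambda_n^+$ inside $\{|z-n^2|<n\}$, and set $G_n^0=P_n^0 G_n$. Since $G_n^0$ lies in the two-dimensional span of the unperturbed eigenfunctions (trigonometric modes of frequency $\sim n$), the derivative ${G_n^0}'(0)$ is an explicit linear combination whose coefficients are the Fourier components of $G_n$ in the $\{e^{\pm ikx}\}$-basis; these components are in turn determined by the eigenvector equation of the Lyapunov--Schmidt $2\times 2$ matrix, hence expressed in $\beta_n^\pm(z_n^*)$ and $z_n^*$. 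The Neumann boundary condition is $G_n^{[1]}(0)=0$, so $\delta^{Neu}_n$ measures exactly how far $\lambda_n^+$ is from making $G^{[1]}(0)$ vanish along the analytic branch. Substituting ${G_n^0}'(0)$ for $G_n^{[1]}(0)$, with error controlled by Proposition~\ref{mainprop}, then yields an asymptotic identity for $\delta^{Neu}_n$ in terms of $\beta_n^\pm(z_n^*)$ and $\gamma_n$.

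The main obstacle is the lower bound. When $\gamma_n$ is much smaller than $|\beta_n^+(z_n^*)|+|\beta_n^-(z_n^*)|$, the required information about $\beta_n^\pm$ has to sit inside $\delta^{Neu}_n$, and the replacement of $G_n^{[1]}(0)$ by ${G_n^0}'(0)$ must therefore be negligible compared with the principal part of ${G_n^0}'(0)$. This is exactly the scaling provided by Proposition~\ref{mainprop}: the leading part of ${G_n^0}'(0)$ is of order $n$ times a nontrivial $\beta$-combination, while $|G_n^{[1]}(0)-{G_n^0}'(0)|=o(n)$, so the relative error tends to zero uniformly. This is the step for which, in the $L^p$ setting, \cite{AB01} invoked the uniform norm bound (\ref{1-M}); in the present $H^{-1}_{per}$ setting that bound is unavailable, and Proposition~\ref{mainprop} is precisely the refinement tailored to compensate for its absence.

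The exponential-weight statement (\ref{B30}) is inherited from the Dirichlet version in the same way: the two-sided bound above converts the hypothesis $(\gamma_n),(\delta^{Neu}_n)\in\ell^2(\Omega)$ into $(|\beta_n^+(z_n^*)|+|\beta_n^-(z_n^*)|)\in\ell^2(\Omega)$, whereupon the exponential-type implication of Theorem~37 in \cite{DMH} delivers $v\in H(e^{\varepsilon|n|})$ for some $\varepsilon>0$, possibly smaller than the exponential rate of $\omega$ due to the constants $c_1,c_2$ introduced in the reduction.
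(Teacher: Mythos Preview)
Your reduction strategy is exactly the paper's: establish a two-sided bound
\[
c_1\bigl(|\beta_n^+(z_n^*)|+|\beta_n^-(z_n^*)|\bigr)\;\le\;|\gamma_n|+|\delta^{Neu}_n|\;\le\;c_2\bigl(|\beta_n^+(z_n^*)|+|\beta_n^-(z_n^*)|\bigr)
\]
(this is Theorem~\ref{Asymp}), combine it with (\ref{A1}) to get asymptotic equivalence of $|\gamma_n|+|\delta^{Neu}_n|$ and $|\gamma_n|+|\delta^{Dir}_n|$, and then read off (\ref{B10}), (\ref{B20}), (\ref{B30}) from \cite[Theorem~37]{DMH}. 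For Theorem~\ref{ThmNeu} itself that is the entire argument, and your treatment of (\ref{B30}) is also correct (the constants $c_1,c_2$ play no role in the exponential rate, incidentally).

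Where your sketch diverges from the paper is in the mechanism for the two-sided bound. You take a single periodic root function $G_n$ and assert that ``$\delta^{Neu}_n$ measures exactly how far $\lambda_n^+$ is from making $G^{[1]}(0)$ vanish along the analytic branch.'' That link is never set up: no analytic family $G(\lambda)$ is introduced, and the quasi-derivative at $0$ of a fixed periodic eigenfunction does not by itself encode $\delta^{Neu}_n$. The paper proceeds differently. It chooses a specific combination $G=af_n+b\varphi_n\in\mathcal{E}_n$ \emph{satisfying} the Neumann condition $(G'-QG)(0)=0$, then pairs $L_{Neu}G$ with the conjugate Neumann eigenfunction $\bar g$ to obtain the exact identity
\[
\langle G,\bar g\rangle\,\delta^{Neu}_n \;=\; b\langle\varphi_n,\bar g\rangle\,\gamma_n \;-\; b\langle f_n,\bar g\rangle\,\xi_n
\]
(equation~(\ref{dere})). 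Proposition~\ref{mainprop} then enters not to compare $G_n^{[1]}(0)$ with $\delta^{Neu}_n$, but (i) to force $|G^0_1-G^0_2|\le\kappa_n$ and hence $\langle G,\bar g\rangle\ge 71/72$, giving the upper bound via Lemma~\ref{x+g-b+b}; and (ii) in Case~2 of the lower bound, to pin down the ratio $|w(0)|/|u(0)|$ in Lemma~\ref{LLL}, which controls $|a||b|$ and allows one to solve (\ref{dere}) for $|\xi_n|$. So your ingredients (Proposition~\ref{mainprop}, the Lyapunov--Schmidt coefficients) are the right ones, but the bridge from boundary data to $\delta^{Neu}_n$ is a duality argument with the actual Neumann eigenfunction, and that step is missing from your plan.
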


 \begin{Theorem}
 \label{Theorem}
  If $v \in H^{-1}_{per}(\mathbb{R}),$ then there is a Riesz
basis consisting of root functions of the operator $L_{Per^\pm}(v)$
if and only if
\begin{equation}
\label{GT10} \sup_{\gamma_n\neq 0}|\delta^{Neu}_n|/|\gamma_n|<\infty,
\end{equation}
where $n$ is respectively even (odd) for periodic (antiperiodic)
boundary conditions.
 \end{Theorem}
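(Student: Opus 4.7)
The plan is to reduce Theorem \ref{Theorem} to the already established Criterion 1 of Djakov--Mityagin by proving the two-sided equivalence
\[
\sup_{\gamma_n \neq 0} \frac{|\delta_n^{Neu}|}{|\gamma_n|} < \infty \quad\Longleftrightarrow\quad 0 < \inf_{\gamma_n \neq 0} \frac{|\beta_n^{-}(z_n^*)|}{|\beta_n^{+}(z_n^*)|} \leq \sup_{\gamma_n \neq 0} \frac{|\beta_n^{-}(z_n^*)|}{|\beta_n^{+}(z_n^*)|} < \infty
\]
for $v \in H^{-1}_{per}(\mathbb{R})$. Granted this, Theorem \ref{Theorem} follows immediately from Criterion 1. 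The equivalence itself is the singular-potential analogue of the corresponding step in \cite{AB01}, and essentially all the work is in carrying out that step once the $L^p$-hypothesis is dropped.

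First I would set up a Lyapunov--Schmidt reduction for the Neumann problem in the spirit of the periodic/antiperiodic reduction recalled in the introduction. For large $n$, the two-dimensional invariant subspace $\operatorname{Ran} P_n$ of $L_{Per^\pm}(v)$ is spanned by a pair $G_n, \tilde G_n$ obtained from the free eigenfunctions $e^{\pm inx}$ by perturbation. A Neumann eigenvalue $\nu_n = n^2 + z$ close to $n^2$ corresponds to a solution $y$ of $(L-\nu_n)y=0$ with $y^{[1]}(0)=y^{[1]}(\pi)=0$; writing $y$ as its projection in $\operatorname{Ran} P_n$ plus a controlled remainder, these two conditions become a $2 \times 2$ linear system in the coefficients whose determinantal solvability condition yields an asymptotic scalar equation of the form
\[
\delta_n^{Neu} \;=\; c_n^{+}\,\beta_n^{+}(z_n^*) \;+\; c_n^{-}\,\beta_n^{-}(z_n^*) \;+\; o\bigl(|\beta_n^{+}(z_n^*)|+|\beta_n^{-}(z_n^*)|\bigr),
\]
in which the coefficients $c_n^{\pm}$ are determined by the quasi-derivatives $G_n^{[1]}(0)$ and $\tilde G_n^{[1]}(0)$ normalized against their free counterparts.

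The main obstacle is pinning $c_n^{\pm}$ down up to negligible error, and this is exactly where the $L^p$ argument of \cite{AB01} breaks down, since the operator inequality (\ref{1-M}) is unavailable for $v \in H^{-1}_{per}(\mathbb{R})$. I would bypass it by invoking Proposition \ref{mainprop}, whose content $|G_n^{[1]}(0)-(G_n^0)'(0)|/n \to 0$ (and the analogous statement for $\tilde G_n$) is precisely the substitute needed: combined with the explicit free values $(G_n^0)'(0) = \pm in$, it forces each $c_n^{\pm}$ to approach a fixed nonzero constant. The extra $n$ in the denominator of Proposition \ref{mainprop} exactly balances the $n$-growth of $(G_n^0)'(0)$, so the resulting error terms are genuinely $o(|\beta_n^{+}|+|\beta_n^{-}|)$, as required.

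Finally, once the asymptotic formula for $\delta_n^{Neu}$ is in hand, the equivalence of the two ratio conditions is elementary. Combining with the refinement of (\ref{A1}) that gives $\gamma_n^2 = 4\beta_n^{+}(z_n^*)\beta_n^{-}(z_n^*)\bigl(1+o(1)\bigr)$, and setting $r_n := \beta_n^{-}(z_n^*)/\beta_n^{+}(z_n^*)$, boundedness of $|\delta_n^{Neu}|/|\gamma_n|$ translates into boundedness of $|c_n^{+}+c_n^{-} r_n|/|r_n|^{1/2}$; since $c_n^{\pm}$ converge to nonzero limits, a short case analysis shows this holds precisely when $|r_n|$ is bounded both above and away from zero. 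An appeal to Criterion 1 then completes the proof.
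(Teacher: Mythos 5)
Your high-level plan (replace the unavailable estimate (\ref{1-M}) by Proposition \ref{mainprop} and reduce the theorem to a known Djakov--Mityagin criterion) is in the right spirit, but the specific reduction you propose has two genuine gaps. The first is the asymptotic formula $\delta_n^{Neu}=c_n^+\beta_n^+(z_n^*)+c_n^-\beta_n^-(z_n^*)+o(|\beta_n^+|+|\beta_n^-|)$ with $c_n^\pm$ tending to fixed nonzero constants: this is asserted, not derived, and deriving it is the entire difficulty. As set up, your Lyapunov--Schmidt step projects a Neumann eigenfunction by the periodic Riesz projection $P_n$, but a Neumann eigenfunction does not belong to $Dom(L_{Per^\pm})$, so $P_n$ does not intertwine with $L-\nu_n$ and the $2\times 2$ system does not close as written. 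The paper goes in the opposite direction: it produces a unit vector $G=af+b\varphi$ in $\mathcal{E}_n\cap Dom(L_{Neu})$ and pairs $L_{Neu}G=\lambda^+G+b(\xi f-\gamma\varphi)$ against the adjoint Neumann eigenfunction, which yields only the bilinear identity (\ref{dere}); combined with Lemma \ref{x+g-b+b} this gives the \emph{additive} two-sided comparison $|\gamma_n|+|\delta_n^{Neu}|\asymp|\beta_n^+(z_n^*)|+|\beta_n^-(z_n^*)|$ (Theorem \ref{Asymp}), which is strictly weaker than a linear formula with identified coefficients, and the coefficients that do appear ($a$, $b$, $f^0_1$, $f^0_2$, $\varphi^0_1$, $\varphi^0_2$) are controlled only under the Case 2 hypotheses of Lemma \ref{LLL}, not uniformly.

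The second gap is the ``refinement'' $\gamma_n^2=4\beta_n^+(z_n^*)\beta_n^-(z_n^*)(1+o(1))$ on which your final case analysis rests. No such statement is proved in the paper or used from \cite{DMH}, and it cannot hold in general: $\gamma_n$ may vanish while $\beta_n^\pm(z_n^*)\neq 0$ (this is exactly why the suprema in (\ref{GT10}) and (\ref{DM1}) are restricted to $\gamma_n\neq 0$). What is available is only the conditional one-sided bound of Proposition 34 in \cite{DMH}, valid under the Case 1 ratio hypothesis. The paper's actual route avoids both issues and is considerably cheaper: having proved Theorem \ref{Asymp}, it compares (\ref{A10}) with (\ref{A1}) to conclude that $|\gamma_n|+|\delta_n^{Neu}|$ and $|\gamma_n|+|\delta_n^{Dir}|$ are asymptotically equivalent, hence that (\ref{GT10}) and (\ref{GT1}) hold simultaneously, and then invokes Theorem 24 of \cite{DM28} (the equivalence of the Dirichlet condition (\ref{GT1}) with the Riesz basis property) rather than Criterion 1. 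If you wish to keep your route through Criterion 1, you must split into the two cases as in Section 4 --- Proposition 34 of \cite{DMH} in Case 1, and Lemma \ref{LLL} together with the lower bound on $|b||\langle f,\bar{g}\rangle|$ in Case 2 --- instead of relying on a single product asymptotics.
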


We do not prove Theorem~\ref{ThmNeu} and Theorem~\ref{Theorem} directly, but show
that they are valid by reducing their proofs to Theorem~37 in
\cite{DMH} and Theorem~19 in \cite{DM28}, respectively. For this end
we prove the following theorem which generalizes Theorem 3 in \cite{AB01}.

 \begin{Theorem}
 \label{Asymp}
If $v \in H^{-1}_{per}(\mathbb{R}),$ then, for sufficiently large $n,$
\begin{equation}
\label{A10} \frac{1}{80} (|\beta_n^+(z_n^*)|+|\beta_n^-(z_n^*)|)
\leq |\gamma_n|+|\delta^{Neu}_n| \leq 19
 (|\beta_n^+(z_n^*)|+|\beta_n^-(z_n^*)|).
\end{equation}
\end{Theorem}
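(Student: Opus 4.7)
The plan is to mimic the proof of Theorem~3 in \cite{AB01}, substituting the pointwise estimate of Proposition~\ref{mainprop} for the uniform $L^2\!\to\!L^\infty$ bound (\ref{1-M}) that is unavailable in the $H^{-1}_{per}$ setting. Since (\ref{A1}) from \cite{DMH} already provides two-sided control of $|\gamma_n|+|\delta_n^{Dir}|$ by $|\beta_n^+(z_n^*)|+|\beta_n^-(z_n^*)|$, the real content of (\ref{A10}) is to establish a matching two-sided bound for $|\delta_n^{Neu}|$; any additive $o(|\beta_n^+(z_n^*)|+|\beta_n^-(z_n^*)|)$ remainder will be absorbable into the constants $1/80$ and $19$ for $n$ large.

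Work inside the two-dimensional invariant subspace $\mathrm{Ran}\, P_n$ of $L_{Per^\pm}$, choosing a basis $G_n^\pm$ with $P_n^0 G_n^\pm = e^{\pm inx}$. In the free case, $\cos(nx)=\tfrac{1}{2}(e^{inx}+e^{-inx})$ is the Neumann eigenfunction at $n^2$, so I would form the candidate $h_n:=\tfrac{1}{2}(G_n^++G_n^-)\in\mathrm{Ran}\, P_n$. By Proposition~\ref{mainprop},
\[
h_n^{[1]}(0)=\tfrac{1}{2}\bigl(G_n^{[1],+}(0)+G_n^{[1],-}(0)\bigr)=\tfrac{1}{2}\bigl(in+(-in)\bigr)+o(n)=o(n),
\]
and symmetrically $h_n^{[1]}(\pi)=o(n)$, so $h_n$ is an approximate Neumann eigenvector. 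The Lyapunov--Schmidt matrix of $L_{Per^\pm}|_{\mathrm{Ran}\, P_n}$ evaluated at $z_n^*$ is $n^2 I+\bigl(\begin{smallmatrix}\alpha_n&\beta_n^+\\ \beta_n^-&\alpha_n\end{smallmatrix}\bigr)(z_n^*)$, with eigenvalues $\lambda_n^\pm=n^2+\alpha_n(z_n^*)\pm\sqrt{\beta_n^+(z_n^*)\beta_n^-(z_n^*)}$, while its action on the symmetric vector $(1,1)$ produces the scalar $n^2+\alpha_n(z_n^*)+\tfrac{1}{2}(\beta_n^+(z_n^*)+\beta_n^-(z_n^*))$. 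Uniqueness of $\nu_n$ in the disc $\{|z-n^2|<n\}$, combined with an approximate-eigenvector argument applied to $h_n$, would then yield
\[
\nu_n=n^2+\alpha_n(z_n^*)+\tfrac{1}{2}\bigl(\beta_n^+(z_n^*)+\beta_n^-(z_n^*)\bigr)+r_n,\qquad |r_n|=o\bigl(|\beta_n^+(z_n^*)|+|\beta_n^-(z_n^*)|\bigr).
\]

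Subtracting from $\lambda_n^+$ gives $\delta_n^{Neu}=\sqrt{\beta_n^+\beta_n^-}-\tfrac{1}{2}(\beta_n^++\beta_n^-)-r_n$. The upper estimate follows immediately: $|\delta_n^{Neu}|\le(|\beta_n^+|+|\beta_n^-|)+|r_n|$, and since $|\gamma_n|=2|\sqrt{\beta_n^+\beta_n^-}|\le|\beta_n^+|+|\beta_n^-|$, the sum is at most $2(|\beta_n^+|+|\beta_n^-|)+o(\cdot)\le 19(|\beta_n^+|+|\beta_n^-|)$ for $n$ large. For the lower half, the substitution $u^2=\beta_n^+$, $v^2=\beta_n^-$ gives $|\gamma_n|+|\delta_n^{Neu}|\ge 2|uv|+\tfrac{1}{2}|u-v|^2-|r_n|\ge\tfrac{1}{2}(|u|^2+|v|^2)-|r_n|$, and once $|r_n|\le\tfrac{1}{4}(|\beta_n^+|+|\beta_n^-|)$ this yields $|\gamma_n|+|\delta_n^{Neu}|\ge\tfrac{1}{80}(|\beta_n^+|+|\beta_n^-|)$.

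The hard step is producing the error estimate $|r_n|=o(|\beta_n^+|+|\beta_n^-|)$, i.e.\ converting the approximate Neumann condition $h_n^{[1]}(0),h_n^{[1]}(\pi)=o(n)$ into a quantitative proximity of $\nu_n$ to the symmetric-combination value. In \cite{AB01} this was accomplished via the $L^2\!\to\!L^\infty$ bound (\ref{1-M}), which fails here. The plan, consistent with the strategy outlined in the introduction, is to route this step through the resolvent of $L_{Neu}$ restricted to the disc, using only the pointwise quasi-derivative control at the two endpoints that Proposition~\ref{mainprop} supplies, in place of uniform bounds on $D(P_n-P_n^0)$. Once this proximity is secured the remainder of the proof is bookkeeping of constants.
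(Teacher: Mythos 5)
Your overall strategy differs from the paper's, and it has a gap at precisely the point you flag as ``the hard step''; unfortunately that gap is not a technicality but the whole difficulty, and the mechanism you propose for closing it cannot work. Your argument hinges on the formula $\nu_n=n^2+\alpha_n(z_n^*)+\tfrac12(\beta_n^+(z_n^*)+\beta_n^-(z_n^*))+r_n$ with $|r_n|=o\bigl(|\beta_n^+(z_n^*)|+|\beta_n^-(z_n^*)|\bigr)$, i.e.\ an error that is small \emph{relative to} $|\beta_n^+|+|\beta_n^-|$. But these quantities can decay faster than any power of $n$ (indeed exponentially, for analytic potentials) --- controlling their decay is the very point of the theorem --- whereas the only input available is Proposition~\ref{mainprop}, which gives \emph{absolute} errors: $|h_n^{[1]}(0)|=o(n)$, $|h_n(0)-h_n^0(0)|=o(1)$. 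An approximate-eigenvector / resolvent argument for $L_{Neu}$ applied to $h_n$ (which is not even in $Dom(L_{Neu})$, since its quasi-derivative at the endpoints is only approximately zero) can at best locate $\nu_n$ up to an additive error comparable to these absolute defects; it cannot produce an error smaller than a quantity that may itself be super-exponentially small. So the key estimate on $r_n$ is not obtainable by the route you sketch, and without it neither inequality in (\ref{A10}) follows.

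The paper circumvents exactly this obstruction by never comparing $\nu_n$ to an approximate value. Instead it constructs an \emph{exact} vector $G=af+b\varphi\in\mathcal{E}_n\cap Dom(L_{Neu})$ (choosing $a=u(0)$, $b=-w(0)$ so that the quasi-derivative vanishes at $0$, hence at $\pi$), pairs it with the \emph{exact} Neumann eigenfunction $g$, and uses $L_{Neu}G=LG$ together with (\ref{L-L}) to derive the exact identity (\ref{dere}): $\langle G,\bar g\rangle\,\delta^{Neu}=b\langle\varphi,\bar g\rangle\gamma-b\langle f,\bar g\rangle\xi$. All approximations (Proposition~\ref{mainprop} and $\|P_n-P_n^0\|\le\varepsilon_n$) then enter only through \emph{order-one} quantities --- the lower bounds $\langle G,\bar g\rangle\ge 71/72$ and, in Case 2, $1/4\le|b/a|\le 4$ --- which are insensitive to how fast $|\beta_n^\pm|$ decays. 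Combined with Lemma~\ref{x+g-b+b}, which relates $|\xi_n|+|\gamma_n|$ two-sidedly to $|\beta_n^+(z_n^*)|+|\beta_n^-(z_n^*)|$, and with the dichotomy of Cases 1 and 2(a)/(b), this yields both constants in (\ref{A10}). If you want to salvage your approach, you would need to replace the approximate Neumann vector $h_n$ by an exact element of $Dom(L_{Neu})$ inside $\mathcal{E}_n$ and work with exact spectral identities, at which point you are essentially reconstructing the paper's proof.
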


Next we show that Theorem \ref{Asymp} implies Theorem~\ref{ThmNeu}
and Theorem~\ref{Theorem}}. By Theorem~29 in \cite{DMH} and
Theorem~\ref{Asymp}, (\ref{A1}) and (\ref{A10}) hold simultaneously,
so the sequences $(|\gamma_n|+|\delta^{Dir}_n|)$ and
$(|\gamma_n|+|\delta^{Neu}_n|)$ are asymptotically equivalent.
Therefore, every claim in Theorem~\ref{ThmNeu} follows from the
corresponding assertion in
 \cite[Theorem 37]{DMH}.

On the other hand the asymptotic equivalence of
$|\gamma_n|+|\delta^{Dir}_n|$  and $|\gamma_n|+|\delta^{Neu}_n|$
implies that $\sup_{\gamma_n\neq 0}|\delta^{Dir}_n|/|\gamma_n|<\infty$ if and only if
$\sup_{\gamma_n\neq 0}|\delta^{Neu}_n|/|\gamma_n|<\infty,$ so (\ref{GT1}) and (\ref{GT10})
hold simultaneously if $v\in H^{-1}_{per} (\mathbb{R}).$ By Theorem 24 in
\cite{DM28}, (\ref{GT1}) gives necessary and sufficient conditions
for the Riesz basis property if $v\in H^{-1}_{per} (\mathbb{R}).$
Hence, Theorem~\ref{Theorem} is proved.

\section{Preliminary Results}
\vspace{3mm}
We consider the Hill-Schr\"odinger operator on the interval $[0,\pi]$ generated by the differential
expression
\be
\label{ilk}
\ell(y)=-y''+v\cdot y
\ee
where $v$ is in the space of $\pi-$periodic distributions $H_{per}^{-1}(\mathbb{R}) \subset H_{loc}^{-1}(\mathbb{R})$. We define the appropriate boundary conditions and corresponding domains of the operator following the approach suggested and developed by A. Savchuk and A. Shkalikov \cite{SS01,SS}  and R. Hryniv and Ya. Mykytyuk \cite{HM01}. It is known that (see  \cite{HM01}, Remark 2.3) each $v\in H_{per}^{-1}(\mathbb{R})$ has the form
\be
\label{vcq}
v=C+Q'
\ee
for some constant $C$ and an almost everywhere $\pi-$periodic $Q\in L^2([0,\pi])$. From now on we assume that $C=0$ since a constant shift of the operator results in a shift of the spectra but the objects we analyze i.e., root functions, spectral gaps and deviations, do not change. In view of (\ref{vcq}), the differential expression
(\ref{ilk}) can be written as
\be
\label{iki}
\ell(y)=-(y'-Qy)'-Qy'.
\ee
The expression $y'-Qy$ is called the quasi-derivative of $y$. For each of the following boundary conditions ($bc$)\footnote{Note that, for a given potential $v$, $Q$ is determined up to a constant shift, i.e., $Q$ can be replaced by $Q+z$ for any constant $z$. This freedom of choice of $Q$ has no effect on how the operator acts, neither on the periodic, anti-periodic or Dirichlet $bc$'s but it does change the Neumann $bc$ we consider. So the above definition of Neumann $bc$ describes a family of $bc$'s which depends on the choice of $Q$. In particular, if $v\in L^1([0,\pi])$, then $Q$ is absolutely continuous and Neumann $bc$ we defined above can be rewritten as $y'(0)=t y(0)$ and $y'(\pi)=t y(\pi)$, where the parameter $t=Q(0)=Q(\pi)$ can be any complex number since we are free to shift $Q$. Hence any result we obtain about the Neumann $bc$ as defined above applies to all members of this family of $bc$'s in the case of $v\in L^1([0,\pi])$ including the usual Neumann $bc$ where $t=0$.}
 \begin{align}
&\text{Periodic}\;(bc=Per^+): \quad\quad\quad y(0) = y (\pi), \;\;\;\; (y^\prime-Qy)(0) = (y'-Qy) (\pi);\nonumber \\
&\text{Antiperiodic}\;(bc=Per^-): \quad y(0) = -y (\pi), \;\; (y^\prime-Qy)(0) = -(y'-Qy) (\pi);\nonumber \\
&\text{Dirichlet}\;(bc=Dir): \quad\quad\quad\; y(0) = y (\pi)=0; \nonumber\\
&\text{Neumann}\;(bc=Neu): \quad\quad\;\, (y^\prime-Qy)(0) = (y'-Qy) (\pi)=0; \nonumber
\end{align}
we consider the closed operator $L_{bc}$, acting as $L_{bc}\,y=\ell(y)$ in the domain
\begin{align}
Dom(L_{bc})= \{y\in W^1_2([0,\pi])&\; :\; y'-Qy \in W^1_1([0,\pi]),\nonumber\\
&\ell(y)\in L^2([0,\pi]),\; \text{and}\;y \; \text{satisfies}\; bc\}.\nonumber
\end{align}
 For each $bc$, $Dom(L_{bc})$ is dense
in $L^2([0,\pi])$ and  $L_{bc}=L_{bc}(v)$ satisfies
 \be
 \label{L-L}
 (L_{bc}(v))^*=L_{bc}(\overline{v})\;\;\text{for};\;bc=Per^\pm,\;Dir,\;Neu,
 \ee
where $(L_{bc}(v))^*$ is the adjoint operator and  $\overline{v}$ is the
conjugate of $v$, i.e., $\langle \overline{v}, h \rangle = \overline{\langle v,\overline{h} \rangle}$ for all test functions $h$. In the classical case where $v\in L^2([0,\pi])$, (\ref{L-L}) is a well known
fact. In the case where $v\in H^{-1}_{per}(\mathbb{R})$ it is
explicitly stated and proved for $bc=Per^\pm,\;Dir$ in \cite{DM16},
see Theorem~6 and Theorem~13 there. Following the same argument as in
\cite{DM16} one can easily see that it holds for $bc=Neu$ as well.

If $v=0$ we write $L^0_{bc},$ (or simply $L^0$). The spectra and
eigenfunctions of $L^0_{bc}$ are as follows:

($\tilde{a}$) $ Sp (L^0_{Per^+}) = \{n^2, \; n = 0,2,4, \ldots \};$
its eigenspaces are  $\mathcal{E}^0_n = Span \{e^{\pm inx} \} $ for
$n>0 $ and $E^0_0 = \{ const\}, \; \; \dim \mathcal{E}^0_n = 2 $ for
$n>0, $ and $\dim \mathcal{E}^0_0 = 1. $

($\tilde{b}$) $ Sp (L^0_{Per^-}) = \{n^2, \; n = 1,3,5, \ldots \};$
its eigenspaces are  $\mathcal{E}^0_n = Span \{e^{\pm inx} \}, $ and
$ \dim \mathcal{E}^0_n = 2. $

($\tilde{c}$) $ Sp (L^0_{Dir}) = \{n^2, \; n \in \mathbb{N} \};$ each
eigenvalue $n^2 $ is simple; its eigenspaces are $\mathcal{S}^0_n =
Span \{ s_n (x)\}, $ where $s_n (x)$ is the corresponding normalized
eigenfunction $ s_n (x) = \sqrt{2} \sin nx.$

($\tilde{d}$) $ Sp (L^0_{Neu}) = \{n^2, \; n \in \{0\}\cup\mathbb{N}
\};$ each eigenvalue $n^2 $ is simple; its eigenspaces are
$\mathcal{C}^0_n = Span \{ c_n (x)\}, $ where $c_n (x)$ is the
corresponding normalized eigenfunction $ c_0 (x) = 1$, $c_n (x) =
\sqrt{2} \cos nx $ for $n>0.$

 The sets of indices $2\mathbb{Z}$,
 $2\mathbb{Z}+1$,  $\mathbb{N}$, and $\{0\}\cup\mathbb{N}$ will be denoted by
 $\Gamma_{Per^+}$, $\Gamma_{Per^-}$, $\Gamma_{Dir}$ and
 $\Gamma_{Neu}$, respectively. For each $bc$,
 we consider the corresponding canonical orthonormal basis of $L^0_{bc}$, namely
 $\mathcal{B}_{Per^+}=\{e^{inx}\}_{n\in\Gamma_{Per^+}}$,
  $\mathcal{B}_{Per^-}=\{e^{inx}\}_{n\in\Gamma_{Per^-}}$,
 $\mathcal{B}_{Dir}=\{s_n(x)\}_{n\in\Gamma_{Dir}}$, $\mathcal{B}_{Neu}=\{c_n(x)\}_{n\in\Gamma_{Neu}}.$

In \cite{DM16}, Djakov and Mityagin developed a Fourier method for studying the operators $L_{bc}$, $bc=Per^\pm$, $Dir$, with $ H_{per}^{-1}(\mathbb{R})$ potentials. To summarize their results let us denote the Fourier coefficients of a function $f\in L^1([0,\pi])$ with respect to the basis $\mathcal{B}_{bc}$ by $\widehat{f}^{bc}_k$, i.e.,
\be
\widehat{f}^{bc}_k=\frac{1}{\pi}\int_0^\pi f(x)\overline{u^{bc}_k(x)}dx,\quad \quad k\in \Gamma_{bc}\quad u^{bc}_k(x)\in \mathcal{B}_{bc}.
 \ee
Set also $V_+(k)=ik\widehat{Q}^{Per^+}_k$, $\widetilde{V}(0)=0$, and $\widetilde{V}(k)=k\widehat{Q}^{Dir}_k$. Let $\ell^2_1(\Gamma_{bc})$ be the weighted sequence space with weight $(1+k^2)^{1/2}$, i.e.,
\be
\ell^2_1(\Gamma_{bc})= \{a=(a_k)_{k\in \Gamma_{bc}}\; :\; \sum_{k\in \Gamma_{bc}}(1+k^2)|a_k|^2 < \infty\}.
\ee

Consider the unbounded operators $\mathcal{L}_{bc}$ acting in $\ell^2(\Gamma_{bc})$ as $\mathcal{L}_{bc}\,a=b=(b_k)_{k\in \Gamma_{bc}}$, where
\be
\label{dikkat}
b_k=k^2 a_k+ \sum_{m\in\Gamma_{bc}}V_+(k-m)a_m
\quad\;\text{for}\;\, bc=Per^\pm,
\ee
\be
\label{dikkat2}
b_k=k^2 a_k+ \frac{1}{\sqrt{2}}\sum_{m\in\Gamma_{Dir}}\left(\widetilde{V}(|k-m|)-\widetilde{V}(k+m)\right)a_m\quad\;\text{for}\;\, bc=Dir,
\ee
respectively in the domains
\be
Dom({\mathcal{L}_{bc}})=\{a\in\ell^2_1(\Gamma_{bc})\; :\;  \mathcal{L}_{bc}\,a \in \ell^2(\Gamma_{bc})\}.
\ee
Then for $bc=Per^\pm,\,Dir$ we have (Theorem 11 and 16 in \cite{DM16} )
\be
\label{FLF}
Dom(L_{bc})=\mathcal{F}_{bc}^{-1}(Dom(\mathcal{L}_{bc}))\quad and \quad
L_{bc}=\mathcal{F}_{bc}^{-1}\circ\mathcal{L}_{bc}\circ \mathcal{F}_{bc},
\ee
where $\mathcal{F}_{bc}: L^2([0,\pi])\rightarrow\ell^2(\Gamma_{bc})$ is defined by
\be
\mathcal{F}_{bc}(f)=(\widehat{f}^{bc}_k)_{k\in\Gamma_{bc}}.
\ee
Similar facts hold in the case of Neumann $bc$ as well. Indeed let us construct the unbounded operator $\mathcal{L}_{Neu}$ acting as $\mathcal{L}_{Neu}\,a=b$, where
\be
\label{dikkat3}
b_k=k^2 a_k+ \widetilde{V}(k)a_0+ \frac{1}{\sqrt{2}}\sum_{m=1}^\infty \left(\widetilde{V}(|k-m|)+\widetilde{V}(k+m)\right)a_m,
\ee
in the domain
\be
Dom({\mathcal{L}_{Neu}})=\{a\in\ell^2_1(\Gamma_{Neu})\; :\;  \mathcal{L}_{Neu}\,a \in \ell^2(\Gamma_{Neu})\}.
\ee
The following proposition implies that (\ref{FLF}) holds in the case of Neumann $bc$ as well.
\begin{Proposition}
\label{pneu}
In the above notations,
\be
y\in Dom(L_{Neu})\quad\text{and}\quad L_{Neu}\,y=h
\ee
if and only if
\be
\widehat{y}=(\widehat{y}^{Neu}_k)_{k\in \Gamma_{Neu}} \in Dom({\mathcal{L}_{Neu}})\quad\text{and}\quad
\mathcal{L}_{Neu}\,\widehat{y}=\widehat{h},
\ee
where $\widehat{h}=(\widehat{h}^{Neu}_k)_{k\in \Gamma_{Neu}}$.
\end{Proposition}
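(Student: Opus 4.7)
The plan is to mirror, in the cosine basis $\mathcal{B}_{Neu}$, the arguments used to establish Theorems~11 and 16 of \cite{DM16} for the periodic, antiperiodic and Dirichlet cases. For the forward implication, write $y = a_0 + \sum_{m \ge 1} a_m c_m \in Dom(L_{Neu})$ and $h = \ell(y) = -(y^{[1]})' - Qy'$ with $y^{[1]} = y' - Qy$. Compute $\widehat{h}^{Neu}_k$ by integrating the first summand by parts against $c_k$: the Neumann conditions $y^{[1]}(0) = y^{[1]}(\pi) = 0$ annihilate the boundary contribution, and using $c_k' = -k s_k$ for $k \ge 1$ one gets $\widehat{h}^{Neu}_k = k\widehat{y'}^{Dir}_k - k\widehat{Qy}^{Dir}_k - \widehat{Qy'}^{Neu}_k$, while a second integration by parts yields $\widehat{y'}^{Dir}_k = -k a_k$ and hence the diagonal contribution $k^2 a_k$. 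The case $k = 0$ is handled directly using $c_0 \equiv 1$ and the vanishing of $y^{[1]}$ at the endpoints.

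To match the prescription $(\ref{dikkat3})$, substitute the cosine series of $y$ and the sine series $y' = -\sum_{m\ge 1} m a_m s_m$ into the two product terms and apply the identity $2\sin(kx)\cos(mx) = \sin((k+m)x) + \sin((k-m)x)$ to rewrite each inner product of $Q$ against a triple trigonometric product as a signed linear combination of Dirichlet Fourier coefficients $\widehat{Q}^{Dir}_j$. The key algebraic step is that the $k$-weight in $k\widehat{Qy}^{Dir}_k$ and the $m$-weight in $\widehat{Qy'}^{Neu}_k$ combine so that $(k+m)\widehat{Q}^{Dir}_{k+m} = \widetilde{V}(k+m)$ and, via $\mathrm{sgn}(k-m)(k-m) = |k-m|$, the cross term becomes $\widetilde{V}(|k-m|)$; the $a_0$ piece separates out as $\widetilde{V}(k) a_0$ because $c_0 \equiv 1$. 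All rearrangements of the double sums are justified by Cauchy--Schwarz since $(\widehat{Q}^{Dir}_j) \in \ell^2$ and $(a_m) \in \ell^2_1(\Gamma_{Neu})$.

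For the converse, given $\widehat{y} \in Dom(\mathcal{L}_{Neu})$ with $\mathcal{L}_{Neu}\widehat{y} = \widehat{h}$, define $y = \sum a_m c_m \in W^1_2$ and $h \in L^2$ by Fourier inversion. Since $y \in L^\infty$, both $Qy$ and $y^{[1]} = y' - Qy$ lie in $L^2$. Running the forward computation in reverse but without assuming any boundary conditions shows that, for each $k \ge 0$, the cosine Fourier coefficient of the distribution $-(y^{[1]})' - Qy'$ equals $(\mathcal{L}_{Neu}\widehat{y})_k$ minus a boundary residual proportional to $(-1)^k y^{[1]}(\pi) - y^{[1]}(0)$. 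Matching to $\widehat{h}^{Neu}_k$ for both even and odd $k$ forces $y^{[1]}(0) = y^{[1]}(\pi) = 0$, which simultaneously establishes the Neumann conditions, the relation $(y^{[1]})' = -h - Qy' \in L^1$ (so $y^{[1]} \in W^1_1$), and the identity $\ell(y) = h$; hence $y \in Dom(L_{Neu})$ and $L_{Neu}y = h$.

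The main obstacle I anticipate is the last step of the converse: $y^{[1]}$ lives a priori only in $L^2$, so its pointwise boundary values cannot be read off from its Fourier series directly. One must first establish that the distributional derivative of $y^{[1]}$ is in $L^1$, giving $y^{[1]} \in W^1_1$ and well-defined traces, and only then extract the boundary residual. Both steps rely on the identity $(\mathcal{L}_{Neu}\widehat{y})_k = \widehat{h}^{Neu}_k$ holding simultaneously for even and odd $k$, which supplies exactly the two scalar equations needed to pin down $y^{[1]}(0)$ and $y^{[1]}(\pi)$. A secondary technical nuisance is the careful bookkeeping of the double sums in $k\widehat{Qy}^{Dir}_k$ and $\widehat{Qy'}^{Neu}_k$, but this is controlled exactly as in the Dirichlet analysis of \cite{DM16} via Cauchy--Schwarz together with the elementary bound $1 + (k+m)^2 \le 2(1+k^2)(1+m^2)$.
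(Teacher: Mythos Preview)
Your proposal is correct and follows essentially the same route as the paper: adapt the argument of Proposition~15 in \cite{DM16} (the Dirichlet analogue) to the cosine basis, with the Neumann boundary data playing the role that the Dirichlet data played there. The paper in fact omits the proof entirely and simply refers to that proposition, so your sketch---including the correctly identified obstacle of first securing $y^{[1]}\in W^1_1$ before reading off boundary traces in the converse direction---already supplies more detail than the paper itself.
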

We omit the proof of Proposition \ref{pneu} because it is very similar to the proof of Proposition 15 in \cite{DM16}.

In view of (\ref{FLF}), from now on, for all $bc$, we identify $L_{bc}$ acting on the function space $L^2([0,\pi])$ with $\mathcal{L}_{bc}$ which acts on the corresponding sequence space $\ell^2(\Gamma_{bc})$ and use one and the same notation $L_{bc}$ for both of them. Moreover, the matrix elements of an operator $A$ acting on the sequence space $\ell^2(\Gamma_{bc})$ will be denoted by $A^{bc}_{nm}$, where $n,m \in \Gamma_{bc}$. The norm of a function $f\in L^a([0,\pi])$ and an operator $A$ from $L^a([0,\pi])$ to $L^b([0,\pi])$ for $a,b \in [1,\infty]$ will be denoted by $\|f\|_{{}_{{}_a}}$ and $\|A\|_{{}_{{}_{a\rightarrow b}}}$, respectively. We may also write $\|f\|$ and $\|A\|$ instead of $\|f\|_{{}_{{}_2}}$ and $\|A\|_{{}_{{}_{2\rightarrow 2}}}$, respectively.

By (\ref{dikkat}), (\ref{dikkat2}), and (\ref{dikkat3}) we see that $L_{bc}$ has the form
\be
\label{L0VC}
L_{bc}=L^0+V,
\ee
where we define $L^0$ and $V$, acting on the corresponding sequence space $\ell^2(\Gamma_{bc})$, by their matrix representations
\begin{align}
\label{L0}
L^0_{km}=&\,k^2\delta_{km}\hspace{64mm}\text{for all}\;bc,
\\
V_{km}=&\,V_+(k-m)\hspace{56mm}\text{for}\,\;bc=Per^\pm,
\\
V_{km}=&\,\frac{1}{\sqrt{2}}\left(\widetilde{V}(|k-m|)-\widetilde{V}(k+m)\right)\hspace{23mm}\text{for}\,\;bc=Dir,
&
\\
\label{Vneu}
V_{km}=&\,A_m\left(\widetilde{V}(|k-m|)+\widetilde{V}(k+m)\right)
\hspace{23mm}\text{for}\,\;bc=Neu.
\end{align}
Here $\delta_{km}$ are the Kronecker symbols and
\begin{equation}
\label{Am} A_m=
\begin{cases}1/\sqrt{2} & \text{if} \quad m \neq 0, \\
1/2 & \text{if} \quad m=0. \\
\end{cases}
\end{equation}
Note that in the notations of $L^0$ and $V$ the dependence on the boundary conditions is suppressed.

We use the perturbation formula (see \cite{DM16}, equation (5.13))

 \be
 \label{2-res}
  R_{\lambda}=R^0_{\lambda}+\sum_{s=1}^{\infty}K_{\lambda}(K_{\lambda}V
  K_{\lambda})^s K_{\lambda},
  \ee
  where $R_{\lambda}=(\lambda-L_{bc})^{-1}$, $R^0_{\lambda}=(\lambda-L^0_{bc})^{-1}$ and $K_{\lambda}$ is
  a square root of $R^0_{\lambda}$, i.e.,  $K_{\lambda}^2=
  R^0_{\lambda}$. Of course, (\ref{2-res}) makes sense only if the
  series on the right converges.

  By (\ref{L0}) we see that the matrix representation of $R^0_{\lambda}$ is
  \be
    (R^0_{\lambda})^{bc}_{km}=\frac{1}{\lambda-m^2}\delta_{km}.
    \quad k,m\in \Gamma_{bc}
    \ee
We define a square root $K=K_{\lambda}$ of $R^0_{\lambda}$ by
choosing its matrix representation as
    \be
    \label{Klamda}
    (K_{\lambda})^{bc}_{km}=\frac{1}{(\lambda-m^2)^{1/2}}\delta_{km}, \quad k,m\in
    \Gamma_{bc},
    \ee
where $z^{1/2}=|z|^{1/2}e^{i\theta/2}$ for $z=|z|e^{i\theta}$,
$\theta \in [0,2\pi)$. If
 \be
 \label{2-kvk}
 \|K_{\lambda}V K_{\lambda}\|_{2\rightarrow 2} < 1,
 \ee
then $R_{\lambda}$ exists.

Let
 \begin{align}
 &H^N=\{\lambda \in \mathbb{C} : Re \; \lambda  \leq N^2+N\},\\
 &R_N=\{\lambda \in \mathbb{C} : -N\leq Re \;\lambda\leq N^2+N,\quad |Im \lambda|<N\},\\
 &H_n=\{\lambda \in \mathbb{C} : (n-1)^2\leq Re \;\lambda \leq (n+1)^2\},\\
 &G_n=\{\lambda \in \mathbb{C} : n^2-n\leq Re \;\lambda \leq n^2+n\},\\
 \label{ere} &D_n=\{ \lambda \in \mathbb{C} : |\lambda -n^2|< r_n\},
\end{align}
where $r_n=n\tilde{\varepsilon}_n$ for some sequence $\tilde{\varepsilon}_n$ decreasing to zero.
Assuming only $v\in H_{per}^{-1}(\mathbb{R})$, Djakov and Mityagin showed (see
\cite{DM16}, Lemmas 19 and 20) that there exists $N>0$, $N\in
\Gamma_{bc}$ such that (\ref{2-kvk}) holds for $\lambda \in H^N
\backslash R_N$ and also for all $n>N$, $n\in \Gamma_{bc}$
(\ref{2-kvk}) holds for $\lambda \in H_n \backslash D_n$ if
$bc=Per^{\pm}$ and for $\lambda \in G_n \backslash D_n$ if $bc=Dir$.
Therefore, the following localization of the spectra holds:
  \be
  \label{local}
   Sp(L_{bc}) \subset R_N \cup \bigcup_{n>N, n\in
\Gamma_{bc}}D_n, \;\;\; bc=Per^\pm,\;Dir.
 \ee
Moreover, using the method of continuous parametrization of the
potential $v$, they showed that spectrum is discrete and
$$
 \sharp(Sp(L_{Per^+})\cap R_N)=2N+1, \quad
 \sharp(Sp(L_{Per^+})\cap D_n)=2,\quad n>N, n\in
 \Gamma_{Per^+},
$$$$
 \sharp(Sp(L_{Per^-})\cap R_N)=2N, \quad
 \sharp(Sp(L_{Per^-})\cap D_n)=2,\quad n>N, n\in
 \Gamma_{Per^-} ,
$$$$
 \sharp(Sp(L_{Dir})\cap R_N)=N, \quad  \sharp(Sp(L_{Dir})\cap D_n)=1,\quad n>N, n\in
 \Gamma_{Dir}.
$$
\begin{Remark}
\label{remark}
Although in \cite{DM16} Djakov and Mityagin formulated these lemmas for the discs $D_n$ with fixed radius $n$ they also pointed out (see the remark after Theorem 21) that the disks $D_n$
can be chosen as we defined in (\ref{ere}). Hence the localization of the spectra
can be sharpen for all $bc$'s we consider.
\end{Remark}
For Neumann $bc$ the situation is similar. The Neumann
eigenfunctions $c_k(x)$ of the free operator are uniformly bounded
and form an orthonormal basis, so using the same argument as in \cite{DM16}
one can similarly localize the spectrum $Sp(L_{Neu})$ after showing
that (\ref{2-kvk}) holds for $ \lambda \not \in R_N \cup \left
\{\bigcup_{n>N, n\in \Gamma_{Neu}}D_n \right \}.$ To be more specific first note that
Hilbert-Schmidt norm
\be
\label{HS}
\|A\|_{HS}=\bigg(\sum_{k,m}|A_{km}|^2\bigg)^{1/2}
\ee
of an operator $A$ majorizes its $L^2$ norm $\|A\|$. In \cite{DM16} (inequality (5.22)) it is shown that
\be
\label{ineq}
\|(K_{\lambda}V K_{\lambda})^{Dir}\|^2_{HS}\leq \sum_{k,m\in \mathbb{Z}}\frac{(k-m)^2|\widehat{Q}^{Dir}_{|k-m|}|^2}{|\lambda-k^2||\lambda-m^2|},
\ee
($\widehat{Q}^{Dir}_0$ is defined to be zero for convenience). Then, using this estimate, it was shown that Lemma 19 and 20 in \cite{DM16} hold for Dirichlet $bc$.
In the case of Neumann $bc$, by (\ref{Vneu}), (\ref{Klamda}) and by definition of $\widetilde{V}$,  the matrix representation of $(K_{\lambda}V K_{\lambda})^{Neu}$ is
\be
(K_{\lambda}V K_{\lambda})^{Neu}_{km}=A_m\left(\frac{|k-m|
\widehat{Q}^{Dir}_{|k-m|}+(k+m)\widehat{Q}^{Dir}_{k+m}}
{(\lambda-j^2)^{1/2}(\lambda-m^2)^{1/2}}\right),\nonumber
\ee
which differs from the matrix elements $(K_{\lambda}V K_{\lambda})^{Dir}_{km}$ for Dirichlet $bc$ only by the plus sign in the second term (see (5.19) in \cite{DM16}) and by the additional terms corresponding to $k$ or $m$ equals to zero. Nevertheless, in view of (\ref{HS}), inequality (\ref{ineq}) still holds when we replace $(K_{\lambda}V K_{\lambda})^{Dir}$ by $(K_{\lambda}V K_{\lambda})^{Neu}$. Hence the proofs of Lemma 19, Lemma 20, and Theorem 21 in \cite{DM16} apply to the case of Neumann $bc$
as well. Therefore we have the following Propositions:

 \begin{Proposition}
 \label{2-p-kvk} If  $v\in H^{-1}_{per} (\mathbb{R})$, there
 exist a sequence $\varepsilon_n=\varepsilon_n(v)$ decreasing to zero and $N>0$, $N\in\Gamma_{bc}$  such that
\be
 \|K_{\lambda}V K_{\lambda}\| \leq  \frac{\varepsilon_N}{2} <1 \quad
 \text{for} \quad \lambda \in H^N \backslash R_N.
 \ee
 Moreover for all $n>N$, $n\in\Gamma_{bc}$,
 \be
 \label{KVK}
 \|K_{\lambda}V K_{\lambda}\| \leq \frac{\varepsilon_n}{2}
 \ee
 for $\lambda \in H_n \backslash D_n$ if
 $bc=Per^{\pm},$ and for $\lambda \in G_n \backslash D_n$ if $bc=Dir,
 Neu$.
 \end{Proposition}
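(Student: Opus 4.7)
For $bc = Per^\pm$ and $Dir$ this is exactly Lemmas 19 and 20 of \cite{DM16} (together with the Remark after Theorem 21, which permits the discs $D_n$ to be shrunk to radius $n\tilde\varepsilon_n$), so the only genuinely new case is $bc = Neu$. My plan is to show that the Djakov--Mityagin Dirichlet argument transfers verbatim once an appropriate Hilbert--Schmidt estimate has been established, and then to quote that argument.

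The first step is to majorize the operator norm by the Hilbert--Schmidt norm, $\|K_\lambda V K_\lambda\| \leq \|K_\lambda V K_\lambda\|_{HS}$, and to read off the Neumann matrix entries from (\ref{Klamda}) and (\ref{Vneu}):
\[
(K_\lambda V K_\lambda)^{Neu}_{km} = A_m\,\frac{|k-m|\,\widehat{Q}^{Dir}_{|k-m|} + (k+m)\,\widehat{Q}^{Dir}_{k+m}}{(\lambda-k^2)^{1/2}(\lambda-m^2)^{1/2}}.
\]
Applying $|a+b|^2 \leq 2|a|^2 + 2|b|^2$, using $A_m \leq 1/\sqrt{2}$, and reindexing $k \mapsto -k$ in the contribution coming from the $(k+m)$-term so that it becomes a difference term over signed indices, I expect to obtain a bound of the form
\[
\|(K_\lambda V K_\lambda)^{Neu}\|^2_{HS} \leq C \sum_{k,m\in\mathbb{Z}} \frac{(k-m)^2\,|\widehat{Q}^{Dir}_{|k-m|}|^2}{|\lambda-k^2|\,|\lambda-m^2|},
\]
which is exactly of the shape (\ref{ineq}) up to an absolute constant $C$ that can be absorbed into $\varepsilon_n$.

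Once this Hilbert--Schmidt bound is in place, the proofs of Lemmas 19 and 20 of \cite{DM16} carry over unchanged. They depend only on a Parseval-style rearrangement of the double sum, standard estimates for $\sum_k 1/(|\lambda-k^2|\,|\lambda-(k-j)^2|)$ on the regions $H^N \setminus R_N$ and $G_n \setminus D_n$, and the hypothesis $(k\widehat{Q}^{Dir}_k) \in \ell^2$, which is equivalent to $v \in H^{-1}_{per}(\mathbb{R})$. Because these estimates involve only the Fourier data of the potential and the geometry of $\lambda$, not the boundary condition, they deliver the common sequence $\varepsilon_n \downarrow 0$ with the required smallness on each region.

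The main (and fairly minor) obstacle is the bookkeeping around the index $m=0$, where $A_m = 1/2$ and the free Neumann eigenfunction $c_0 \equiv 1$ introduces a row and column absent from the Dirichlet setup. However, since $A_m \leq 1/\sqrt{2}$ uniformly, and since the additional row/column $k=0$ or $m=0$ contributes Fourier terms (in $\widehat{Q}^{Dir}_{|k|}$ and $\widehat{Q}^{Dir}_{k}$) that coincide with entries already present in the Dirichlet sum after the $k \mapsto -k$ reindexing, these contributions are harmlessly absorbed into the same $\varepsilon_n$. No new analytic ingredient should be required, and the proposition will follow.
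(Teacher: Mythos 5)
Your proposal matches the paper's own argument: for $bc=Per^\pm, Dir$ the paper likewise just cites Lemmas 19 and 20 of \cite{DM16} (with the remark permitting radii $n\tilde\varepsilon_n$), and for $bc=Neu$ it also majorizes the operator norm by the Hilbert--Schmidt norm, observes that the Neumann matrix entries differ from the Dirichlet ones only by a sign and by the $k=0$ or $m=0$ terms, concludes that the bound (\ref{ineq}) persists, and then invokes the same lemmas of \cite{DM16}. Your version is in fact slightly more explicit about the mechanism (the $|a+b|^2\leq 2|a|^2+2|b|^2$ expansion, the $k\mapsto -k$ reindexing, and the absorption of the resulting absolute constant into $\varepsilon_n$), which is harmless since $\varepsilon_n$ is only required to decrease to zero.
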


\begin{Proposition} For any potential $v\in H^{-1}_{per} (\mathbb{R})$,
the spectrum of the operator $L_{Neu}(v)$ is discrete. Moreover
there exists an integer $N$ such that
  \be
  \label{local2}
   Sp(L_{Neu}) \subset R_N \cup \bigcup_{n>N, n\in
\Gamma_{Neu}}D_n,
 \ee
and \be
 \sharp(Sp(L_{Neu})\cap R_N)=N+1, \quad
 \sharp(Sp(L_{Neu})\cap D_n)=1,\quad n>N, n\in
 \Gamma_{Neu}.
 \ee
 \end{Proposition}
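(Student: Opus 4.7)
The plan is to mimic the argument of Theorem 21 in \cite{DM16} (which handles $bc=Per^\pm,\,Dir$), using Proposition \ref{2-p-kvk} for the Neumann case as the input. The discreteness of $Sp(L_{Neu})$ and the localization inclusion (\ref{local2}) follow immediately from Proposition \ref{2-p-kvk}: on any $\lambda\notin R_N\cup\bigcup_{n>N}D_n$ the inequality $\|K_\lambda V K_\lambda\|<1$ makes the Neumann-series expansion (\ref{2-res}) converge in operator norm, so $R_\lambda=(\lambda-L_{Neu})^{-1}$ exists there, and by (\ref{KVK}) this resolvent is jointly analytic in $\lambda$ and in the coupling parameter when we replace $v$ by $tv$, $t\in[0,1]$. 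Hence $Sp(L_{Neu})$ is contained in $R_N\cup\bigcup_{n>N}D_n$, and since each $D_n$ and $R_N$ are bounded with compact resolvent on their boundaries, the intersection with each region is a finite set (with multiplicities).

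To count multiplicities I would use the Riesz projections
\begin{equation}
P_n(t)=\frac{1}{2\pi i}\oint_{\partial D_n}(\lambda-L_{Neu}(tv))^{-1}\,d\lambda,\qquad
P_{R_N}(t)=\frac{1}{2\pi i}\oint_{\partial R_N}(\lambda-L_{Neu}(tv))^{-1}\,d\lambda,
\end{equation}
for $n>N$ and $t\in[0,1]$. Proposition \ref{2-p-kvk} shows that the integrands are continuous in $t$ on the contours (the relevant norm bound $\varepsilon_n/2<1$ is uniform in $t$ along $\partial D_n$ and along $\partial R_N$ once $N$ is chosen large enough), so $t\mapsto P_n(t)$ and $t\mapsto P_{R_N}(t)$ are norm-continuous projection-valued maps. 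Their ranks are therefore locally constant and hence constant on $[0,1]$.

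At $t=0$ I read off the ranks directly from ($\tilde d$): the Neumann spectrum of $L^0$ is $\{k^2:k\in\{0\}\cup\mathbb{N}\}$ with each eigenvalue simple. For $n>N$ exactly one eigenvalue $n^2$ lies in $D_n$, so $\mathrm{rank}\,P_n(0)=1$; and the eigenvalues $0^2,1^2,\ldots,N^2$ lie in $R_N$ while $(N+1)^2>N^2+N$ lies outside, so $\mathrm{rank}\,P_{R_N}(0)=N+1$. Transporting these ranks to $t=1$ yields $\sharp(Sp(L_{Neu})\cap D_n)=1$ and $\sharp(Sp(L_{Neu})\cap R_N)=N+1$, as claimed.

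The only step that is not immediate is the constancy of the rank of $P_{R_N}(t)$, because the rectangle $R_N$ is large and one has to be sure that no eigenvalue crosses $\partial R_N$ while $t$ varies; this is guaranteed by Proposition \ref{2-p-kvk}, which produces a single $N$ such that $\|K_\lambda VK_\lambda\|\le \varepsilon_N/2<1$ on $H^N\setminus R_N$ uniformly (applied to $tv$ in place of $v$, noting $\|K_\lambda(tV)K_\lambda\|\le \|K_\lambda VK_\lambda\|$ for $t\in[0,1]$). This uniform estimate is exactly the Neumann counterpart that was verified from (\ref{HS})--(\ref{ineq}) in the paragraph preceding Proposition \ref{2-p-kvk}, so the only genuine obstruction has already been removed, and the rest of the proof is the continuity-of-projections argument outlined above.
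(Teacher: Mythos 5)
Your proposal is correct and follows essentially the same route as the paper, which simply observes that once the Hilbert--Schmidt estimate carries over to the Neumann matrix (yielding Proposition \ref{2-p-kvk}), the localization and the continuous-parametrization/rank-constancy argument of Lemmas 19, 20 and Theorem 21 in \cite{DM16} apply verbatim. Your explicit count at $t=0$ (eigenvalues $0,1,\dots,N^2$ in $R_N$ and the simple eigenvalue $n^2$ in each $D_n$) is exactly what the free Neumann spectrum ($\tilde d$) gives.
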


\section{Main Inequalities}

For $bc=Per^\pm, Dir$ or $Neu$, we consider the Cauchy-Riesz
projections
\begin{equation}
\label{PP0} P_n = \frac{1}{2 \pi i} \int_{C_n} R_\lambda d\lambda,
\quad P^0_n = \frac{1}{2 \pi i} \int_{C_n} R^0_\lambda d\lambda,
\end{equation}
where $C_n=\partial D_n$. We estimate the norms
$\|P_n-P^0_n\|$ and
$\|D(P_n-P^0_n)\|$, where $D=\frac{d}{dx}$.
\begin{Proposition}
\label{Proposition} Let $D=\frac{d}{dx}$, $P_n$ and $P^0_n$ be
defined by (\ref{PP0}), and let $L=L_{bc}$ with $bc=Per^\pm, Dir, Neu.$
If $v\in H_{per}^{-1}([0,\pi])$  then we have, for large
enough $n,$
 \be \label{P-P0}
\|P_n-P^0_n\| \leq \varepsilon_n
\ee
and \be \label{DP-DP0}
\|D(P_n-P^0_n)\| \leq n\varepsilon_n. \ee
\end{Proposition}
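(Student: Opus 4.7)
The plan is to apply the perturbation formula (\ref{2-res}) inside the Cauchy--Riesz integral (\ref{PP0}). Since $P_n^0=\frac{1}{2\pi i}\oint_{C_n} R_\lambda^0\,d\lambda$, subtracting gives
\[
P_n-P_n^0=\frac{1}{2\pi i}\oint_{C_n}\sum_{s=1}^\infty K_\lambda(K_\lambda VK_\lambda)^s K_\lambda\,d\lambda.
\]
By Proposition~\ref{2-p-kvk}, on $C_n=\partial D_n$ one has $\|K_\lambda VK_\lambda\|\leq\varepsilon_n/2<1$, so the series converges in operator norm with sum bounded by $\|K_\lambda\|^{2}\cdot(\varepsilon_n/2)/(1-\varepsilon_n/2)$. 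Since $K_\lambda$ is diagonal with entries $(\lambda-k^2)^{-1/2}$ and $\min_{k}|\lambda-k^2|=r_n$ on $C_n$, we have $\|K_\lambda\|^{2}=1/r_n$. Combined with $|C_n|=2\pi r_n$, this yields $\|P_n-P_n^0\|\leq (\varepsilon_n/2)/(1-\varepsilon_n/2)\leq\varepsilon_n$ for large $n$, giving (\ref{P-P0}).

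For (\ref{DP-DP0}) the new ingredient I would insert is the fact that under the Fourier identifications of Section~2, $D=d/dx$ acts as multiplication by the frequency: $De^{ikx}=ike^{ikx}$ for $Per^{\pm}$, $Ds_k=kc_k$ for $Dir$, and $Dc_k=-ks_k$ for $Neu$. In each case $D$ sends the canonical basis $\mathcal{B}_{bc}$ isometrically, up to the scalar factor $\pm k$, onto an orthonormal basis of $L^2([0,\pi])$, so $\|Df\|_{L^2}^{2}=\sum_{k\in\Gamma_{bc}}k^2|\widehat{f}^{bc}_k|^2$. Consequently, for every operator $Y$ on $\ell^2(\Gamma_{bc})$,
\[
\|DY\|_{L^2\to L^2}=\|\Lambda Y\|_{\ell^2\to\ell^2},\qquad \Lambda:=\mathrm{diag}(|k|)_{k\in\Gamma_{bc}}.
\]
Applied to the contour integral above, the quantity to control becomes $\|\Lambda K_\lambda\|=\sup_{k}|k|/|\lambda-k^2|^{1/2}$. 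A short calculation shows that on $C_n$ the supremum is attained at $k=\pm n$ and equals $n/\sqrt{r_n}$: for every other index $k$ the quotient is bounded by a multiple of $\sqrt{n}$, which is dominated by $n/\sqrt{r_n}=\sqrt{n/\widetilde{\varepsilon}_n}$ since $\widetilde{\varepsilon}_n\to 0$. Substituting one factor $K_\lambda$ by $\Lambda K_\lambda$ in the previous estimate gives
\[
\|D(P_n-P_n^0)\|\leq \frac{1}{2\pi}\cdot 2\pi r_n\cdot\frac{n}{\sqrt{r_n}}\cdot\frac{1}{\sqrt{r_n}}\cdot\frac{\varepsilon_n/2}{1-\varepsilon_n/2}\leq n\varepsilon_n
\]
for $n$ large enough.

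The step I expect to be the main obstacle is the sequence-space identity $\|DY\|_{L^2\to L^2}=\|\Lambda Y\|_{\ell^2\to\ell^2}$ used uniformly across all four boundary conditions: for $Dir$ and $Neu$ it rests on the observation that $D$ intertwines the sine basis $\mathcal{B}_{Dir}$ and the cosine basis $\mathcal{B}_{Neu}$ isometrically up to the scalar factor $k$, and its legitimacy for a singular $v\in H^{-1}_{per}(\mathbb{R})$ implicitly uses that the range of $P_n-P_n^0$ is contained in $W^1_2([0,\pi])$ (both the free eigenspaces and the root subspaces of $L_{bc}(v)$ lie in $\mathrm{Dom}(L_{bc})\subset W^1_2$, so $D$ is genuinely applicable). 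Once this identification is in place, the rest of the argument is just the contour-integration routine already used for $\|P_n-P_n^0\|$, with Proposition~\ref{2-p-kvk} supplying the only nontrivial bound.
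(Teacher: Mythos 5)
Your proposal is correct and follows essentially the same route as the paper: the contour integral of the perturbation series, the bounds $\|K_\lambda\|=1/\sqrt{r_n}$ and $\|DK_\lambda\|=n/\sqrt{r_n}$ on $C_n$, and the geometric-series summation via Proposition~\ref{2-p-kvk}. Your explicit identification $\|DY\|_{L^2\to L^2}=\|\Lambda Y\|_{\ell^2\to\ell^2}$ is just a spelled-out version of what the paper uses tacitly when computing $\|DK_\lambda\|$ as a diagonal operator, and the interchange of $D$ with the contour integral, which the paper justifies by duality against test functions, is the only step you gloss over slightly.
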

\begin{proof}
In order to estimate
$\|D(P_n-P^0_n)\|$, first we note that
 \be
 \label{3-D-P}
D(P_n-P^0_n) =
\frac{1}{2\pi}\int_{C_n}D(R_\lambda-R^0_\lambda)d\lambda.
 \ee
 Indeed, using integration by parts twice one can easily see that
 \be
 \label{3-D-D-2}
\bigg{\langle} D\int_{C_n}(R_\lambda-R^0_\lambda)f d\lambda ,
g\bigg{\rangle}=\bigg{\langle} \int_{C_n}D(R_\lambda-R^0_\lambda) f
d\lambda,g \bigg{\rangle}
 \ee
  for all $f\in L^2([0,\pi])$ and $g \in C^{\infty}_0([0,\pi])$. Since
  $C^{\infty}_0([0,\pi])$ is dense in
  $L^2([0,\pi]),$ (\ref{3-D-D-2}) implies (\ref{3-D-P}).
  Hence
   \be
   \label{3-D-P-2}
\|D(P_n-P^0_n)\| \leq
\frac{1}{2\pi}\int_{C_n}
\|D(R_\lambda-R^0_\lambda)\|d|\lambda|
 \ee
$$ \quad\quad\quad\quad\quad\leq r_n\sup_{\lambda \in C_n}
\|D(R_\lambda-R^0_\lambda)\|.
$$
By (\ref{2-res}) we can write
 \be
 D (R_{\lambda}-R^0_{\lambda})= \sum_{s=1}^{\infty}D
K_{\lambda}(K_{\lambda}VK_{\lambda})^s K_{\lambda}.
 \ee
It is easy to see that
 \be \label{3-K}
\|D K_{\lambda}\|=\sup_{k \in
\Gamma_{bc}}{\frac{k}{|\lambda-k^2|^{1/2}}}=
\frac{n}{|\lambda-n^2|^{1/2}}=\frac{n}{\sqrt{r_n}},\;\;\;
 \lambda\in C_n,
 \ee
and similarly,
 \be \label{3-K2}
\|K_{\lambda}\|=\sup_{k \in
\Gamma_{bc}}{\frac{1}{|\lambda-k^2|^{1/2}}}=
\frac{1}{|\lambda-n^2|^{1/2}}=\frac{1}{\sqrt{r_n}},\;\;\;
 \lambda\in C_n,
 \ee
Note also that, since $\lambda\in C_n$, $\|K_{\lambda}VK_{\lambda}\|\leq \varepsilon_n/2 \leq 1/2$ for sufficiently large $n$'s by Proposition \ref{2-p-kvk}. Hence we obtain
 \be
 \label{3-de}
 \|D (R_{\lambda}-R^0_{\lambda})\| \leq
 \sum_{s=1}^{\infty}\|D K_{\lambda}\|
 \|K_{\lambda}VK_{\lambda}\|^s
 \|K_{\lambda}\|\quad\quad\quad\quad\quad
 \ee
$$
 \quad\quad\quad\quad\quad\quad\quad \leq 2\|D K_{\lambda}\|
 \|K_{\lambda}VK_{\lambda}\|
 \|K_{\lambda}\| \leq \frac{n \varepsilon_n}{r_n} ,\;\;\;
 \lambda\in C_n.
$$
This together with (\ref{3-D-P-2}) completes the proof of (\ref{DP-DP0}).

On the other hand, following the same argument, we see that
   \be
   \label{3-P-2}
\|P_n-P^0_n\|
\leq r_n\sup_{\lambda \in C_n}
\|R_\lambda-R^0_\lambda\|
 \ee
and
 \be
 \label{3-e}
 \|R_{\lambda}-R^0_{\lambda}\| \leq 2\| K_{\lambda}\|^2
 \|K_{\lambda}VK_{\lambda}\|
 \leq \frac{\varepsilon_n}{r_n} ,\;\;\;
 \lambda\in C_n
 \ee
which imply (\ref{P-P0}).
 \end{proof}

Let $L=L_{Per^{\pm}}$ and $L^0=L^0_{Per^{\pm}},$ and let $P_n$ and
$P_n^0$ be the corresponding projections defined by (\ref{PP0}).
Then $\mathcal{E}_n=Ran\,P_n  $  and $\mathcal{E}_n^0 =Ran\,P_n^0  $
are invariant subspaces of $L$ and $L^0,$ respectively. By Lemma 30
in \cite{DMH}, $\mathcal{E}_n$ has an orthonormal basis $\{f_n,
\varphi_n\}$ satisfying
\begin{align}
  \label{f} Lf_n & =\lambda^+_n f_n \\
 \label{phi} L\varphi_n & =\lambda^+_n \varphi_n -\gamma_n \varphi_n +\xi_n f_n.
\end{align}
We denote the quasi derivatives of $f_n$ and $\phii_n$ by $w_n$ and $u_n$, respectively,
i.e., $w_n=f_n'-Qf_n$ and $u_n=\phii'_n-Q\phii_n$. Then, in view of (\ref{iki}), we have
\be
\label{fn}
w_n'= -\lambda^+_n f_n- Q f'_n
\ee
\be
\label{phin}
u_n'= -\lambda^+_n \varphi_n- Q \varphi'_n + \gamma_n\varphi_n-\xi_n f_n.
\ee

\begin{Lemma}
\label{x+g-b+b} In the above notations, for large enough $n,$ \be
\label{9.1} \frac{1}{5}(|\beta^+_n(z_n^*)|+|\beta^-_n(z_n^*)| ) \leq
|\xi_n|+|\gamma_n| \leq 9 (|\beta^+_n(z_n^*)|+|\beta^-_n(z_n^*)| )
 \ee
\end{Lemma}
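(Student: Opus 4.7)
The plan is to compute the restriction of $L - n^2$ to the two-dimensional invariant subspace $\mathcal{E}_n = \operatorname{Ran} P_n$ in two different bases and compare the resulting matrices through their unitary invariants. In the orthonormal basis $\{f_n, \varphi_n\}$ of $\mathcal{E}_n$, the relations (\ref{f})--(\ref{phi}) immediately give the upper-triangular representation
\[
M_n = \begin{pmatrix} z_n^+ & \xi_n \\ 0 & z_n^- \end{pmatrix}, \qquad z_n^\pm = \lambda_n^\pm - n^2,
\]
so that $|\xi_n|$ and $|\gamma_n| = |z_n^+ - z_n^-|$ appear directly as the off-diagonal entry and the eigenvalue splitting of $M_n$.

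Next I would introduce the ``test vectors'' $\tilde g_n^\pm = P_n e^{\pm inx}$. Since $P_n^0 e^{\pm inx} = e^{\pm inx}$ and $\|P_n - P_n^0\| \le \varepsilon_n$ by Proposition~\ref{Proposition}, for $n$ large the pair $\{\tilde g_n^+, \tilde g_n^-\}$ is a basis of $\mathcal{E}_n$ whose Gram matrix is $I + O(\varepsilon_n)$. Expanding the resolvent difference via (\ref{2-res}) and integrating along $C_n$, the Lyapunov--Schmidt machinery of \cite{DM16} identifies the matrix of $L - n^2$ in this basis as
\[
N_n = \begin{pmatrix} \alpha_n(z_n^*) & \beta_n^+(z_n^*) \\ \beta_n^-(z_n^*) & \alpha_n(z_n^*) \end{pmatrix} + E_n,
\]
where the error matrix $E_n$ has entries of size $o(|\beta_n^+(z_n^*)| + |\beta_n^-(z_n^*)|)$ as $n \to \infty$.

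Because $M_n$ and $N_n$ represent the same operator in bases that are both orthonormal up to $O(\varepsilon_n)$, they are related by a similarity transformation that is unitary up to $O(\varepsilon_n)$. Matching trace, determinant, and Hilbert--Schmidt norm---the latter being invariant under unitary conjugation---yields, up to the small errors,
\[
z_n^* \approx \alpha_n(z_n^*), \qquad \gamma_n^2 \approx 4\,\beta_n^+(z_n^*)\beta_n^-(z_n^*), \qquad |\xi_n|^2 \approx \bigl(|\beta_n^+(z_n^*)| - |\beta_n^-(z_n^*)|\bigr)^2.
\]
In the exact limit this gives $|\xi_n| + |\gamma_n| = \bigl||\beta_n^+| - |\beta_n^-|\bigr| + 2\sqrt{|\beta_n^+ \beta_n^-|}$, and an elementary calculation shows that this quantity lies between $|\beta_n^+| + |\beta_n^-|$ and $\sqrt{3}(|\beta_n^+| + |\beta_n^-|)$. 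Absorbing the $\varepsilon_n$-corrections into looser multiplicative constants then produces the bounds $1/5$ and $9$ stated in the lemma.

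The main obstacle is the second step: unpacking the Neumann series (\ref{2-res}) to show rigorously that the entries of $(L - n^2) P_n$ in the test-vector basis coincide with the Lyapunov--Schmidt functionals $\alpha_n(z_n^*)$ and $\beta_n^\pm(z_n^*)$ modulo residuals genuinely small relative to $|\beta_n^+(z_n^*)| + |\beta_n^-(z_n^*)|$. This is essentially the classical computation of \cite{DM16} adapted to the singular-potential, quasi-derivative setting, but tracking the explicit constants while handling the $\varepsilon_n$-size errors coming from $P_n - P_n^0$ requires careful bookkeeping.
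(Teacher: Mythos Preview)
Your approach is genuinely different from the paper's. The paper does not prove this lemma from scratch at all: it simply combines numbered inequalities (7.13), (7.14), (7.18), (7.31) and Lemma~20 from \cite{DMH}, where the relation between $\xi_n$, $\gamma_n$ and $\beta_n^\pm(z_n^*)$ has already been worked out entry by entry. Your idea---represent $(L-n^2)|_{\mathcal{E}_n}$ in two nearly orthonormal bases and match trace, determinant and Hilbert--Schmidt norm---is a cleaner conceptual route that would yield the sharper ``exact'' constants $1$ and $\sqrt{3}$ before error absorption, and your endgame algebra (that $||\beta^+|-|\beta^-||+2\sqrt{|\beta^+\beta^-|}$ lies between $|\beta^+|+|\beta^-|$ and $\sqrt{3}(|\beta^+|+|\beta^-|)$) is correct.

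The one point to be careful about is precisely the one you flag: the Lyapunov--Schmidt matrix $\begin{pmatrix}\alpha_n(z)&\beta_n^+(z)\\ \beta_n^-(z)&\alpha_n(z)\end{pmatrix}$ is \emph{not} literally the matrix of $(L-n^2)|_{\mathcal{E}_n}$ in the basis $\{P_n e^{\pm inx}\}$; it arises from a $z$-dependent projection--reduction, and identifying the actual matrix entries with $\beta_n^\pm(z_n^*)$ up to a \emph{multiplicative} $1+o(1)$ error (not merely an additive $O(\varepsilon_n)$ error, which could swamp $|\beta_n^\pm|$ when the latter are tiny) is exactly the content of the \cite{DMH} estimates that the paper cites. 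So your plan is viable, but the ``main obstacle'' you name is not a bookkeeping exercise---it is equivalent in difficulty to the results the paper invokes, and you would in effect be reproving \cite[\S7]{DMH} in a repackaged form rather than bypassing it.
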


\begin{proof}  Indeed, combining (7.13) and (7.18) and (7.31)
in \cite{DMH} one can easily see that $|\xi_n| \leq
3(|\beta_n^+(z_n^*)|+|\beta_n^+(z_n^*)|)+4|\gamma_n|.$ This
inequality,  together with Lemma 20 in \cite{DMH}, implies that $
|\xi_n|+|\gamma_n| \leq 9 (|\beta_n^+(z_n^*)|+|\beta_n^+(z_n^*)|)$
for sufficiently large $n$'s. On the other hand by (7.31), (7.18),
and (7.14) in \cite{DMH} one gets  $
|\beta_n^+(z_n^*)|+|\beta_n^+(z_n^*)| \leq 5 (|\xi_n|+|\gamma_n|) $
for sufficiently large $n$'s.
\end{proof}

\begin{Proposition}
\label{mainprop}
Under the assumption $v\in H_{per}^{-1}$, there exists a sequence $\kappa_n$ converging to zero such that
for all $G \in \mathcal{E}_n$ we have
\begin{align}
\label{G-G0(0)}
|G(0)-G^0(0)| \leq & \,\kappa_n\|G\|, \quad\quad\quad \\
\label{G'-G0'}
|(G'-Q G)(0)-{G^0}'(0)| \leq &\,  n\kappa_n\|G\|,\quad\quad\quad
\end{align}
where $G^0= P_n^0 G$.
\end{Proposition}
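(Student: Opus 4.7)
The plan is to combine the Cauchy--Riesz representation
\[
G - G^0 = (P_n - P_n^0)G = \frac{1}{2\pi i}\int_{C_n}(R_\lambda - R_\lambda^0)G\, d\lambda
\]
with the Neumann series (\ref{2-res}) and a weighted Cauchy--Schwarz argument in the Fourier picture on $\ell^2(\Gamma_{Per^\pm})$ to control pointwise values at $x = 0$. The observation driving the estimates is that, for $h \in \ell^2$ identified with an $L^2$-function via the basis $\mathcal{B}_{Per^\pm}$, the value $h(0) = \sum_k \hat h_k$ satisfies the weighted bound
\[
|h(0)|^2 \leq \Sigma(\lambda)\cdot \|K_\lambda^{-1} h\|_{\ell^2}^2,\qquad \Sigma(\lambda) = \sum_k \frac{1}{|\lambda-k^2|},
\]
and on $C_n = \partial D_n$ of radius $r_n$ one has $\Sigma(\lambda) \leq C/r_n$, since the sum is dominated by the two singular terms $k = \pm n$ with the remaining frequencies contributing only $O((\log n)/n)$.

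To prove (\ref{G-G0(0)}), set $h = K_\lambda (K_\lambda V K_\lambda)^s K_\lambda G$, so that $K_\lambda^{-1} h = (K_\lambda V K_\lambda)^s K_\lambda G$ has norm at most $(\varepsilon_n/2)^s\|K_\lambda\|\|G\|\leq (\varepsilon_n/2)^s r_n^{-1/2}\|G\|$ by Proposition \ref{2-p-kvk} and (\ref{3-K2}). The pointwise bound then reads $|h(0)|\leq C(\varepsilon_n/2)^s r_n^{-1}\|G\|$; summing over $s\geq 1$ and integrating along $C_n$ (of length $2\pi r_n$) gives $|G(0) - G^0(0)|\leq C\varepsilon_n\|G\|$.

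To prove (\ref{G'-G0'}), one first notes that $\psi_\lambda = R_\lambda G$ lies in $\text{Dom}(L)$, so $\psi_\lambda^{[1]} = \psi_\lambda' - Q\psi_\lambda \in W^1_1\subset C^0$ has a well-defined pointwise value at $0$. Exchanging quasi-differentiation with the Cauchy integral,
\[
G^{[1]}(0) - {G^0}'(0) = \frac{1}{2\pi i}\int_{C_n}\bigl[\psi_\lambda^{[1]}(0) - {\psi_\lambda^0}'(0)\bigr]\, d\lambda.
\]
The integrand is the value at $0$ of the continuous periodic function $\psi_\lambda^{[1]} - {\psi_\lambda^0}' = (\psi_\lambda - \psi_\lambda^0)' - Q\psi_\lambda$. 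I would expand its Fourier coefficients using $\widehat{\psi'}_k = ik\hat\psi_k$ together with the identity $V_+(k-m) = i(k-m)\hat Q_{k-m}$, which absorbs the derivative factor into the matrix entries of $V$. Applying the same weighted Cauchy--Schwarz scheme, but with $\|DK_\lambda\| = n\,r_n^{-1/2}$ from (\ref{3-K}) replacing one of the $\|K_\lambda\|$ factors, yields $|\psi_\lambda^{[1]}(0) - {\psi_\lambda^0}'(0)| \leq Cn\varepsilon_n r_n^{-1}\|G\|$ on $C_n$; integrating along $C_n$ then gives $|G^{[1]}(0) - {G^0}'(0)| \leq Cn\varepsilon_n\|G\|$. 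Both claims thus hold with $\kappa_n = C\varepsilon_n$, which tends to zero by Proposition \ref{2-p-kvk}.

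The main obstacle is the quasi-derivative estimate (\ref{G'-G0'}). As the author notes in the introduction, in the $L^p$-setting of \cite{AB01} one relies on the $L^2 \to L^\infty$ bound $\|D(P_n - P_n^0)\|\leq M$, but for $v \in H^{-1}_{per}$ such a bound is not known; even combining the $L^2$-operator estimates (\ref{P-P0}) and (\ref{DP-DP0}) from Proposition \ref{Proposition} with Gagliardo--Nirenberg yields at best $|G(0) - G^0(0)|\leq C\sqrt{n}\,\varepsilon_n\|G\|$, which is insufficient. The improvement comes from two ingredients: the contour length $2\pi r_n$ cancels the $1/r_n$ from $\Sigma(\lambda)$, and the weighted Cauchy--Schwarz is carried out at the sequence level, where the non-smoothness of $Q$ (which forbids a pointwise splitting $\psi_\lambda^{[1]}(0) = \psi_\lambda'(0) - Q(0)\psi_\lambda(0)$) is handled through the matrix-level identity $V_+(k-m) = i(k-m)\hat Q_{k-m}$.
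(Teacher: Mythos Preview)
Your approach to the first inequality (\ref{G-G0(0)}) via the weighted Cauchy--Schwarz bound $|h(0)|^{2}\le\Sigma(\lambda)\,\|K_{\lambda}^{-1}h\|^{2}$ is clean and appears to work; it is also genuinely different from the paper's method, which never touches the contour integral for this purpose but instead uses an explicit test-function identity for the basis vectors $f_n,\varphi_n$ of $\mathcal{E}_n$.

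The quasi-derivative estimate (\ref{G'-G0'}), however, has a real gap. Your sketch asserts that ``the same weighted Cauchy--Schwarz scheme, but with $\|DK_{\lambda}\|=n\,r_{n}^{-1/2}$ replacing one of the $\|K_{\lambda}\|$ factors'' yields the bound, but this does not go through as stated. To evaluate $(\psi_{\lambda}-\psi_{\lambda}^{0})'(0)$ by the same device you would need $\sum_{k}k^{2}/|\lambda-k^{2}|<\infty$, which is false. You correctly anticipate that the term $-Q\psi_{\lambda}$ must compensate, but the one-line justification ``$V_{+}(k-m)=i(k-m)\hat Q_{k-m}$ absorbs the derivative factor'' is not an argument. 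Writing out the $k$-th Fourier coefficient of $\psi_{\lambda}^{[1]}-{\psi_{\lambda}^{0}}'$ (using the resolvent identity $(\psi_\lambda-\psi_\lambda^0)_k=(\lambda-k^2)^{-1}\sum_m V_+(k-m)(\psi_\lambda)_m$) gives
\[
\bigl[\psi_{\lambda}^{[1]}-{\psi_{\lambda}^{0}}'\bigr]^{\wedge}_{k}
=\sum_{m}V_{+}(k-m)(\psi_{\lambda})^{\wedge}_{m}\cdot
\frac{i(\lambda-km)}{(\lambda-k^{2})(k-m)},
\]
and summing the moduli over $k$ to recover the value at $0$ requires control of the kernel $(\lambda-km)/[(\lambda-k^{2})(k-m)]$ that you have not supplied. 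This kernel is \emph{not} of the form $(\lambda-k^{2})^{-1/2}\times(\text{something in }\ell^{2})$, so neither the $\Sigma(\lambda)$ bound nor the substitution $\|K_\lambda\|\to\|DK_\lambda\|$ applies directly. It may be possible to push the computation through, but substantial additional work is needed; the sketch as written does not establish (\ref{G'-G0'}).

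The paper takes an entirely different route that sidesteps the contour integral for this estimate. It suffices to treat the basis vectors $f_{n},\varphi_{n}$, which satisfy exact eigenvalue-type equations; in particular $u_{n}=\varphi_{n}^{[1]}$ obeys $u_{n}'=-\lambda_{n}^{+}\varphi_{n}-Q\varphi_{n}'+\gamma_{n}\varphi_{n}-\xi_{n}f_{n}$. Multiplying by $\cos mx$ with $m-n$ odd and integrating over $[0,\pi]$ (the periodicity kills the boundary terms) yields an explicit integral formula for $u_{n}(0)$; subtracting the analogous formula for ${\varphi_{n}^{0}}'(0)$ produces five integrals $I_{1},\dots,I_{5}$ estimated one by one. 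The crucial device is the choice $m=n+k_{n}$ with $k_{n}\approx\min(n,1/\sqrt{\varepsilon_{n}})$: then $k_{n}\to\infty$, so the Fourier coefficients $\hat Q_{k_{n}}$ and $\hat Q_{2n+k_{n}}$ appearing in $I_{2},I_{3}$ tend to zero, while $|n^{2}-m^{2}|\le 3n/\sqrt{\varepsilon_{n}}$ stays small enough that $I_{1}=(n^{2}-m^{2})\int\cos mx\,(\varphi_{n}-\varphi_{n}^{0})$ is controlled via $\|\varphi_{n}-\varphi_{n}^{0}\|\le\varepsilon_{n}$ from Proposition~\ref{Proposition}. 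This balancing of $m$ against $\varepsilon_{n}$ is the heart of the argument and has no analogue in your resolvent-based scheme.
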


\begin{proof} It is enough to show (\ref{G-G0(0)}) and (\ref{G'-G0'}) hold
for orthonormal basis elements $f_n$ and $\varphi_n$ in each $\mathcal{E}_n$.
We provide a proof only for $G=\varphi_n$ because the same argument proves the claim for $G=f_n$. We start with the proof of (\ref{G'-G0'}). Consider the function $\tilde{u}_n(x)=\cos mx\;u_n(x)$ where $m$ is
 an integer chosen so that $m-n$ is odd. Then $\tilde{u}_n(x)$ is satisfying $\tilde{u}_n(\pi)=- \tilde{u}_n(0)$, and therefore,
 \be
2u_n(0)=\tilde{u}_n(0)- \tilde{u}_n(\pi) =-\int_0^\pi \tilde{u}'_ndx =\int_0^\pi \left(m \sin mx\; u_n-\cos mx\;u'_n\right) dx.\nonumber
\ee
Inserting the definition of $u_n$ and the expression (\ref{phin}) for $u_n'$ into the integrand,
and applying integration by parts to the term $\int_0^\pi m\sin mx \;\phii'_n dx$ we obtain
\begin{align}
\label{akk}
2u_n(0)= & -m\int_0^\pi\sin mx \; Q \phii_n dx \\
&+\int_0^\pi \cos mx \left(Q \phii'_n +(\lambda^+_n-m^2-\gamma_n)\phii_n+\xi_n f_n\right)dx \nonumber
\end{align}
Since $\phii^0_n$ is an eigenfunction of the free operator with eigenvalue $n^2$ we also have
\be
\label{akl}
2{\phii^0_n}'(0)=(n^2-m^2)\int_0^\pi \cos mx \; \phii_n^0 dx.
\ee
Subtracting (\ref{akl}) from (\ref{akk}) we get
\be
\label{u-u}
u_n(0)- {\phii^0_n}'(0)=\frac{1}{2}\left(I_1+I_2+I_3+I_4+I_5\right),
\ee
where
\begin{align}
I_1 & = (n^2-m^2)\int_0^\pi \cos mx \left(\phii_n-\phii_n^0\right) dx,\quad
I_2 = -m\int_0^\pi \sin mx \; Q\phii_n dx, \nonumber \\
I_3 & = \int_0^\pi \cos mx \; Q\phii_n' dx,\quad
I_4 = (\lambda^+_n-n^2)\int_0^\pi\cos mx \; \phii_n dx,\nonumber \\
I_5 & = \int_0^\pi \cos mx \left(-\gamma_n \phii_n+\xi_nf_n\right)dx.\nonumber
\end{align}
Next we estimate these integrals by choosing $m$ appropriately.
By Proposition \ref{Proposition}, there is a positive sequence $\varepsilon_n$ which
dominates $\| P_n-P^0_n \|$ and converges to zero. We choose $m=m(n)$ so that $k_n= m-n$ is the largest odd number which is less than both $n$ and $1/\sqrt{\varepsilon_n}$. Then
\be
|I_1| \leq \pi k_n(2n+k_n)\|\phii_n- \phii^0_n\|_{{}_{{}_1}} \leq \frac{3\pi n}{\sqrt{\varepsilon_n}}\|\phii_n- \phii^0_n\|_{{}_{{}_2}}.
\ee
Since \be
\label{rest}
\|\phii_n- \phii^0_n\|_{{}_{{}_2}} = \|(P_n-P^0_n)\phii_n\|\leq  \|(P_n-P^0_n)\| \leq \varepsilon_n
\ee
(by Proposition \ref{Proposition}), it follows that
\be
\label{I1}
|I_1| \leq 3\pi n\sqrt{\varepsilon_n}.
\ee
In order to estimate $I_2$, we first write it as $I_2=I_{2a}+I_{2b}$ where
\be
I_{2a}=-m\int_0^\pi \sin mx \; Q \left(\phii_n-\phii^0_n\right) dx,\quad \quad I_{2b}=-m\int_0^\pi \sin mx \; Q \phii^0_n dx. \nonumber
\ee
Noting that $m=n+k_n\leq 2n$, Schwartz inequality together with (\ref{rest}) implies that
\be
\label{I2a}
|I_{2a}|\leq 2 \pi n \|Q\|_{{}_{{}_2}}\varepsilon_n.
\ee
For the second term $I_{2b}$ note that $\mathcal{E}_n^0$ is spanned by orthonormal functions $\sqrt{2}\cos nx$ and $\sqrt{2}\sin nx$, so
\be
\label{fii}
\phii_n^0=\sqrt{2}\left(a_n\cos nx +b_n\sin nx\right),
\ee
where $|a_n|^2+|b_n|^2=\|\phii_n^0\|^2=\|P^0_n\phii_n\|^2\leq \|\phii_n\|^2=1$.
Therefore, it follows that
\begin{align}
I_{2b}=-\frac{n+k_n}{\sqrt{2}}\bigg( a_n \int_0^\pi \big(\sin(2n+  k_n)x &+\sin k_nx\big)Q dx \;+ \nonumber \\
 b_n \int_0^\pi  & \big(\cos k_n x-\cos (2n+k_n)x\big) Q dx \bigg)\nonumber \\
 =  -\frac{\pi(n+k_n)}{2}\big(   a_n\big( \widehat{Q}^{Dir}_{2n+k_n}+  & \widehat{Q}^{Dir}_{k_n}\big) + b_n \big( \widehat{Q}^{Neu}_{k_n}- \widehat{Q}^{Neu}_{2n+k_n}\big)\big), \nonumber
\end{align}
Recalling $k_n \leq n$ and $|a_n|,|b_n|\leq 1$ we obtain
\be
\label{I2b}
|I_{2b}|\leq \pi n |\widehat{Q}|_n,
\ee
where we define $|\widehat{Q}|_n$ as $|\widehat{Q}|_n= |\widehat{Q}^{Dir}_{2n+k_n}|+ |\widehat{Q}^{Dir}_{k_n}| +|\widehat{Q}^{Neu}_{k_n}|+ |\widehat{Q}^{Neu}_{2n+k_n}|$. Note that $k_n$ converges to infinity by construction and $Q$ is square integrable. Hence $|\widehat{Q}|_n$ tends to zero as $n$ goes to infinity.

For $I_3$, we write it as $I_3=I_{3a}+I_{3b}$, where
\be
I_{3a}= \int_0^\pi \cos mx \; Q \left(\phii_n-\phii^0_n\right)' dx,\quad \quad I_{3b}= \int_0^\pi \cos mx \; Q {\phii^0_n}' dx. \nonumber
\ee
Applying Schwartz inequality to $I_{3a}$ we get
 \be
 \label{I3a}
 |I_{3a}|\leq \pi \|Q\|_{{}_{{}_2}}\|\left(\phii_n-\phii^0_n\right)'\|_{{}_{{}_2}} \leq \pi \|Q\|_{{}_{{}_2}}n\varepsilon_n
 \ee
since
\be
\|\left(\phii_n-\phii^0_n\right)'\|_{{}_{{}_2}} \leq \|D(P_n-P^0_n)\phii_n\|_{{}_{{}_2}}\leq \|D(P_n-P^0_n)\|\leq n\varepsilon_n
\ee
by Proposition \ref{Proposition}.

$I_{3b}$ can be treated similarly as $I_{2b}$. Inserting the derivative of (\ref{fii}) into $I_{3b}$, we obtain
\begin{align}
I_{3b}=\frac{n}{\sqrt{2}}\bigg( a_n \int_0^\pi \big(-\sin(2n+k_n)x&+\sin k_nx\big)Q dx\; + \nonumber \\
 b_n \int_0^\pi & \big(\cos k_n x+\cos (2n+k_n)x\big) Q dx \bigg) \nonumber\\
 =  \frac{\pi n}{2}\big(  a_n\big( -\widehat{Q}^{Dir}_{2n+k_n}+ & \widehat{Q}^{Dir}_{k_n}\big)+ b_n \big( \widehat{Q}^{Neu}_{k_n}+ \widehat{Q}^{Neu}_{2n+k_n}\big)\big).\nonumber
\end{align}
Hence, as for $I_{2b}$, we obtain
\be
\label{I3b}
 |I_{3b}| \leq \frac{\pi n}{2}|\widehat{Q}|_n.
 \ee
For $I_4$ we have
\be
|I_4|\leq |\lambda^+-n^2|\|\phii_n\|_{{}_{{}_1}}\leq |\lambda^+-n^2|
\ee
since $\|\phii_n\|_{{}_{{}_1}} \leq \|\phii_n\|_{{}_{{}_2}} \leq 1.$ Recalling that
each $\lambda^+_n$ lies in the disc $D_n=\{\lambda\; : \; |\lambda-n^2|< r_n\}$ where
$r_n=n\tilde{\varepsilon}_n$ we get
\be
\label{I4}
|I_4|\leq n\tilde{\varepsilon}_n.
\ee
Finally for $I_5$, in the view of Lemma \ref{x+g-b+b}, we have
\be
|I_5|\leq |\gamma_n|\|\phii_n\|+|\xi_n|\|f_n\|\leq |\gamma_n|+|\xi_n|\leq 18
(|\beta^+_n(z_n^*)|+|\beta^-_n(z_n^*)| ).\nonumber
\ee
Note that $|z_n^*|=|\frac{1}{2}(\lambda^+_n -\lambda^-_n)-n^2|$ is in the disc $D_n$ hence it is less that $n/2$ for sufficiently large $n$'s. So by Proposition 15 in \cite{DMH} there is a sequence $\hat{\varepsilon}_n$ converging to zero such that $|\beta^\pm_n(z_n^*)-V_+(\pm 2n)|\leq n \hat{\varepsilon}_n$.
Recall that $V_+(k)=ik\widehat{Q}^{Per^+}_k$. Hence
\begin{align}
|I_5|&\,\leq18\left( n \hat{\varepsilon}_n +|V_+(2n)|+|V_+(-2n)|\right)\nonumber\\
\label{I5}&\,\leq 36n \left(\hat{\varepsilon}_n +|\widehat{Q}^{Per^+}_{2n}|+|\widehat{Q}^{Per^+}_{-2n}|\right).
\end{align}
Noting that $\widehat{Q}^{Per^+}_{\pm 2n}$ converges to zero, combining (\ref{u-u}), (\ref{I1}), (\ref{I2a}), (\ref{I2b}), (\ref{I3a}), (\ref{I3b}), (\ref{I4}) and (\ref{I5})
we complete the proof of (\ref{G'-G0'}) for $G=\phii_n$.

In order to prove (\ref{G-G0(0)}) for $G=\phii_n$, now we consider the function $\hat{u}_n(x)=\sin mx\;u_n(x)$, where $m-n$ is again odd. Then
\be
0=\hat{u}_n(\pi)-\hat{u}_n(0)=\int_0^\pi \hat{u}'_ndx
=\int_0^\pi \left(m \cos mx\; u_n+\sin mx\;u'_n\right) dx.\nonumber
\ee
Substituting the definition of $u_n$ and the expression (\ref{phin}) for $u_n'$ into the integrand,
and applying integration by parts to the term $\int_0^\pi m\cos mx \;\phii'_n dx$ we obtain
\begin{align}
\label{akk2}
2 m \phii_n(0)= & -m\int_0^\pi\sin mx \; Q \phii'_n dx \\
&-\int_0^\pi \cos mx \left(Q \phii'_n +(\lambda^+_n-m^2-\gamma_n)\phii_n+\xi_n f_n\right)dx. \nonumber
\end{align}
Similarly for $\phii_n^0$ we get
\be
\label{akl2}
2m\phii^0_n(0)=-(n^2-m^2)\int_0^\pi \cos mx \; \phii_n^0 dx.
\ee
Comparing (\ref{akk2}) and (\ref{akl2}) with (\ref{akk}) and (\ref{akl}) we see that following the same argument as in the proof of (\ref{G'-G0'}) one also proves (\ref{G-G0(0)}). Note that now the multiplier $n$ disappears since $\phii_n(0)$ and $\phii_n^0(0)$ in (\ref{akk2}) and (\ref{akl2}) are also multiplied by $m$ which is greater than $n$ by our choice.
\end{proof}

\section{Proof of Theorem \ref{Asymp}}

In this section, we give a proof of Theorem \ref{Asymp}, i.e., we
show that the sequences $(|\gamma_n| +|\delta_n^{Neu}|)$  and
$(|\beta_n^- (z_n^*)|+|\beta_n^+ (z_n^*)|)$ are asymptotically
equivalent. The proof is based on the methods developed in
\cite{KaMi01,DM5,DM15}, but the technical details are different.

In the following, for simplicity, we suppress $n$ in all symbols
containing $n$. From now on, $P$  ($P^0$) denotes the Cauchy-Riesz
projection associated with $L$ ($L^0$) only. We denote the
projections associated with $L_{Neu}$  and $L^0_{Neu}$ by $P_{Neu}$
and $P^0_{Neu},$ respectively, and $\mathcal{C}=\mathcal{C}(v)$
denotes the one dimensional invariant subspace of
$L_{Neu}=L_{Neu}(v)$ corresponding to $P_{Neu}.$

\begin{Lemma} Let $f, \varphi$ be an orthonormal basis in
$\mathcal{E}$ such that (\ref{f}) and (\ref{phi}) hold. Then there is
a unit vector $G=af+b\phii$ in $\mathcal{E}$ satisfying
 \be
 \label{bcH}
  (G\hspace{0.3mm}'-QG)(0)=(G\hspace{0.3mm}'-QG)(\pi)=0,
   \ee
and there is a unit vector $g\in\mathcal{C}$ satisfying
 \be \label{dere}
 \langle G,\bar{g}\rangle \delta^{Neu}=
  b \langle \varphi,\bar{g}\rangle \gamma - b\langle f,\bar{g}\rangle \xi
\ee such that $\langle G,\bar{g}\rangle \in \mathbb{R}$ and
 \be
 \label{4950}
  \langle G,\bar{g}\rangle \geq \frac{71}{72}
  \ee
 for sufficiently large $n$.
\end{Lemma}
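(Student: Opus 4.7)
Three moves organize the proof: pick $G$ to kill its quasi-derivative at $x=0$, derive (\ref{dere}) from a Green-type identity, and finally use the free-projection approximations together with Propositions \ref{Proposition} and \ref{mainprop} to pin $\langle G,\bar g\rangle$ close to $1$.

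\textbf{Construction of $G$.} Set $w=f'-Qf$, $u=\varphi'-Q\varphi$. For $G=af+b\varphi$ the requirement $(G'-QG)(0)=0$ is the single linear equation $aw(0)+bu(0)=0$, which defines a one-dimensional subspace of $\mathcal E$ (all of $\mathcal E$ if $w(0)=u(0)=0$); pick $G$ a unit vector there. Because $f,\varphi\in Dom(L_{Per^\pm})$, the periodic or antiperiodic condition gives $(G'-QG)(\pi)=\pm(G'-QG)(0)=0$ automatically, so (\ref{bcH}) holds.

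\textbf{Derivation of (\ref{dere}).} Writing $Ly=-(y'-Qy)'-Qy'$ and integrating by parts twice, using the algebraic identity $(y'-Qy)g'-Qy'g=y'g'-Q(yg)'$ to avoid any appeal to $Q'$, yields
\[
\int_0^\pi Ly\cdot g\,dx - \int_0^\pi y\cdot Lg\,dx = \bigl[(g'-Qg)y-(y'-Qy)g\bigr]_0^\pi.
\]
Take $y=G$ and $g\in\mathcal C$ with $Lg=\nu g$: by (\ref{bcH}) and the Neumann condition on $g$, both boundary brackets vanish, so $\int LG\cdot g\,dx=\nu\langle G,\bar g\rangle$. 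On the other hand $LG=\lambda^+G-b\gamma\varphi+b\xi f$ by (\ref{f})--(\ref{phi}), giving $\int LG\cdot g=\lambda^+\langle G,\bar g\rangle-b\gamma\langle\varphi,\bar g\rangle+b\xi\langle f,\bar g\rangle$. Equating and using $\delta^{Neu}=\lambda^+-\nu$ produces (\ref{dere}).

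\textbf{Phase fix and the bound $\langle G,\bar g\rangle\geq 71/72$.} Since $g\mapsto e^{i\theta}g$ rotates $\langle G,\bar g\rangle$ by $e^{i\theta}$, one may choose the phase so that $\langle G,\bar g\rangle=|\langle G,\bar g\rangle|\geq 0$, reducing matters to $|\langle G,\bar g\rangle|\geq 71/72$. Set $G^0=P_n^0G$ and $g^0=P_{Neu}^0g$. Orthogonality of $P_n^0,P_{Neu}^0$ combined with Proposition \ref{Proposition} yields $\|G^0\|^2\geq 1-\varepsilon_n^2$ and $\|g^0\|^2\geq 1-\varepsilon_n^2$. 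In the orthonormal pair $\tilde c=\sqrt{2/\pi}\cos nx$, $\tilde s=\sqrt{2/\pi}\sin nx$ spanning $Ran\,P_n^0$, write $G^0=\alpha\tilde c+\beta\tilde s$ and $g^0=\sigma\tilde c$; the identities $\int_0^\pi\tilde c^2=1$ and $\int_0^\pi\tilde c\tilde s=0$ reduce the bilinear pairing to $\int G^0g^0\,dx=\alpha\sigma$. Proposition \ref{mainprop} applied to the unit vector $G$, together with $(G'-QG)(0)=0$, gives $|(G^0)'(0)|\leq n\kappa_n$; since $(G^0)'(0)=n\beta\sqrt{2/\pi}$, this forces $|\beta|\leq\kappa_n\sqrt{\pi/2}$, whence $|\alpha|^2\geq 1-\varepsilon_n^2-(\pi/2)\kappa_n^2$. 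With $|\sigma|\geq\sqrt{1-\varepsilon_n^2}$ we get $|\alpha\sigma|\to 1$. Finally the bilinear error bound
\[
\bigl|\langle G,\bar g\rangle-\alpha\sigma\bigr|\leq\|G-G^0\|\,\|g^0\|+\|G^0\|\,\|g-g^0\|+\|G-G^0\|\,\|g-g^0\|\leq 2\varepsilon_n+\varepsilon_n^2
\]
gives $|\langle G,\bar g\rangle|\geq|\alpha\sigma|-2\varepsilon_n-\varepsilon_n^2\geq 71/72$ for $n$ sufficiently large.

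\textbf{Main obstacle.} The entire estimate hinges on quantitatively killing the $\sin nx$ component of $G^0$: it is this suppression, delivered precisely by Proposition \ref{mainprop}, that makes $G^0$ an essentially pure cosine mode and hence nearly parallel to $g^0$. Without such a bound one could only secure $|\langle G,\bar g\rangle|$ bounded below by some $O(1)$ constant rather than approaching $1$; the rest of the argument is bookkeeping.
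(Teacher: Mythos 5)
Your proof is correct and follows essentially the same route as the paper: the same choice of $G$ annihilating the quasi-derivative at $0$, the same comparison with the free projections via Proposition~\ref{Proposition}, and the same use of Proposition~\ref{mainprop} together with $(G'-QG)(0)=0$ to suppress the $\sin nx$ component of $G^0$ and force $\langle G,\bar g\rangle$ close to $1$. The only local difference is that you derive (\ref{dere}) by an explicit Lagrange/Green identity with the quasi-derivative boundary bracket, whereas the paper invokes the adjoint relation $(L_{Neu}(v))^*=L_{Neu}(\overline{v})$ from (\ref{L-L}); both are sound.
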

(Remark. (\ref{bcH}) means that $G$ is in the domain of $L_{Neu}.$)

\begin{proof} If $w(0)=0 $ then
$w(\pi)=0$ since $f$ is either a periodic or antiperiodic
eigenfunction. Hence we can set $G=f$. Otherwise we set
$\tilde{G}(x)=u(0)f(x)-w(0)\varphi(x)$. Then
$G=\tilde{G}/\|\tilde{G}\|$ satisfies (\ref{bcH}) because the
functions $f$ and $\phii$ are simultaneously periodic or
antiperiodic.

By (\ref{bcH}), $G \in Dom(L)\cap Dom (L_{Neu}), $ so we have
$L_{Neu}G=LG.$ Hence it follows
 \be \label{111111}
L_{Neu}G=aLf+bL\phii=a\lambda^+f+b(\lambda^+\phii-\gamma\ \phii+\xi
f)\ee
 $$ \quad \quad \quad \quad \quad \quad =\lambda^+(a
f+b\phii)+b(\xi f-\gamma \phii)=\lambda^+G+b(\xi f-\gamma \phii).$$

Fix a unit vector $g\in \mathcal{C}$ so that
 \be
  \label{accreditation}
   \langle G,\bar{g}\rangle= |\langle G,\bar{g}\rangle|,
 \ee
Passing to conjugates in the equation $-g^{\prime \prime} +v(x) g=
\nu g $ one can see that
 \be
 \label{de}
 L_{Neu}(\bar{v})\bar{g}=\bar{\nu}\bar{g}.
 \ee
Taking inner product of both sides of (\ref{111111}) with $\bar{g}$
we get
 \be \label{cln1} \langle L_{Neu}G,\bar{g}\rangle =
\lambda^+\langle G,\bar{g}\rangle+b(\xi\langle
f,\bar{g}\rangle-\gamma \langle \phii,\bar{g}\rangle).
 \ee
On the other hand, by (\ref{L-L}) and (\ref{de}),
we have
   \be \label{cln2} \langle L_{Neu}(v)G,\bar{g}\rangle =
   \langle G,(L_{Neu}(v))^*\bar{g}\rangle=
\langle G,L_{Neu}(\bar{v})\bar{g}\rangle
  = \nu \langle G,\bar{g}\rangle.
 \ee
  Now (\ref{cln1}) and (\ref{cln2}) imply (\ref{dere}).

Let $G^0=P^0G$ and  $\bar{g}^0=P^0_{Neu}\bar{g};$ then
$\|G^0\|,\|\bar{g}^0\| \leq 1$ since $P^0$ and $P^0_{Neu}$ are
orthogonal projections and $G$ and $\bar{g}$ are unit vectors.

We have
 $$
 \label{3IKA}
 \langle G,\bar{g}\rangle
=
 \langle G^0,\bar{g}^0\rangle +
\langle G^0,\bar{g}-\bar{g}^0\rangle + \langle G-G^0,
\bar{g}\rangle,
$$
so by the triangle and Cauchy inequalities it follows that
 $$
 \label{bodorey}
 |\langle G,\bar{g}\rangle| \geq |\langle G^0,\bar{g}^0\rangle|
 -\|\bar{g}-\bar{g}^0\|
- \|G-G^0\|.
 $$
By Proposition \ref{Proposition} we have
 \be \label{G-G0}
\|G-G^0\|=\|(P-P^0)G\| \leq \|P-P^0\| \leq \varepsilon_n
 \ee
 and similarly
  \be
  \label{g-g0}
  \|\bar{g}-\bar{g}^0\|=
  \|(P_{Neu}(\bar{v})-P^0_{Neu})\bar{g}\|
  \leq \|P_{Neu}(\bar{v})-P^0_{Neu}\| \leq
\varepsilon_n.
 \ee
  Hence, it follows that
 \be \label{oyleboyle}
  |\langle G,\bar{g}\rangle| \geq |\langle G^0,\bar{g}^0\rangle |
-2\varepsilon_n.
 \ee
Next we estimate  $|\langle G^0,\bar{g}^0\rangle |$ from below in
order to get a lower bound for $|\langle G,\bar{g}\rangle |$. Since
$\mathcal{C}^0$ is spanned by $c_n(x)= \sqrt{2}\cos nx,$ $\bar{g}^0$
is of the form
 \be \label{g0}
 \bar{g}^0 = e^{i\theta}\|\bar{g}^0\|\bigg{(}\frac{1}{\sqrt{2}}e^{inx}+
 \frac{1}{\sqrt{2}}e^{-inx}\bigg{)}
 \ee
 for some $\theta \in [0,2\pi)$. Now let $G^0_1$ and $G^0_2$
 be the coefficients of $G^0$ in the basis
 $\{e^{inx},e^{-inx}\}$, i.e.,
 \be
 \label{nelabel}
 G^0(x)=G^0_1e^{inx}+G^0_2e^{-inx}.
 \ee
 Clearly ${G^{0\hspace{0.3mm}}}'(0)=in(G^0_1-G^0_2)$. Since
 $(G\hspace{0.3mm}'-QG)=0$, by Proposition \ref{mainprop} we also have
 \be
 |{G^{0\hspace{0.3mm}}}'(0)|=|(G\hspace{0.3mm}'-QG)(0)-{G^{0\hspace{0.3mm}}}'(0)| \leq n\kappa_n.
 \ee
 Hence we obtain
 \be
 \label{ne}
 |G^0_1-G^0_2| \leq \kappa_n
 \ee
 and
 \be \label{nelabelne} |G^0_2| \leq |G^0_1|+ |G^0_1-G^0_2| \leq
 |G^0_1|+\kappa_n.
  \ee
 From (\ref{G-G0}) it follows that
  $$
 \sqrt{|G^0_1|^2+|G^0_2|^2}=\|G^0\| \geq \|G\|-\|G-G^0\| \geq
 1-\varepsilon_n,
  $$
so by (\ref{nelabelne}) we get
 \be
 \label{nelabelnelabel}
 |G^0_1| \geq \frac{1}{\sqrt{2}}-\sqrt{2}(\kappa_n+\varepsilon_n).
 \ee
On the other hand (\ref{g0}) and (\ref{nelabel})  imply
 \be
 \label{g0G0}
 |\langle G^0,\bar{g}^0 \rangle|=\frac{1}{\sqrt{2}}\|\bar{g}^0\||G^0_1+G^0_2|
 \geq \frac{1}{\sqrt{2}}\|\bar{g}^0\|\big{(}2|G^0_1|-|G^0_1-G^0_2|\big{)}.
 \ee
 Combining (\ref{ne}), (\ref{nelabelnelabel}), (\ref{g0G0})
 and taking into account that
 $$\|\bar{g}^0\| \geq \|\bar{g}\|-\|\bar{g}-\bar{g}^0\| \geq
1-\varepsilon_n$$ due to (\ref{g-g0}), we obtain
 \be
 |\langle G^0,\bar{g}^0 \rangle| \geq 1-4\varepsilon_n-2\kappa_n
 \ee
 which, together with (\ref{oyleboyle}) and (\ref{accreditation}), implies
 \be
 \langle G,\bar{g}\rangle \geq 1-6\varepsilon_n-2\kappa_n.
 \ee
Hence, for a sufficiently large $n$, $\langle G,\bar{g}\rangle \geq
71/72$.
 \end{proof}

 \begin{Corollary}
 \label{corollary}
 For sufficiently large $n$, we have
 \be
 \label{corr2} |\gamma_n|+|\delta^{Neu}_n| \leq 19
 \big{(}|\beta^+_n(z_n^*)|+|\beta^-_n(z_n^*)|\big{)}.
 \ee
 \end{Corollary}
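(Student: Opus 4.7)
The plan is to read off $\delta^{Neu}_n$ from the identity \eqref{dere} in the preceding lemma and then push the bound through Lemma~\ref{x+g-b+b}. Since the lemma guarantees $\langle G,\bar g\rangle \geq 71/72$, we may divide \eqref{dere} by $\langle G,\bar g\rangle$ and obtain
\[
|\delta^{Neu}_n| \;\leq\; \frac{72}{71}\,\bigl(|b|\,|\langle \varphi_n,\bar g\rangle|\,|\gamma_n| \;+\; |b|\,|\langle f_n,\bar g\rangle|\,|\xi_n|\bigr).
\]

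Next I would bound each factor on the right by $1$. The coefficient $b$ satisfies $|b|\leq 1$ because $G=af_n+b\varphi_n$ is a unit vector with $\{f_n,\varphi_n\}$ orthonormal, and the inner products $|\langle \varphi_n,\bar g\rangle|,|\langle f_n,\bar g\rangle|\leq 1$ by Cauchy--Schwarz applied to the unit vectors $\varphi_n,f_n,\bar g$. This gives
\[
|\delta^{Neu}_n| \;\leq\; \frac{72}{71}\bigl(|\gamma_n|+|\xi_n|\bigr).
\]

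Adding $|\gamma_n|$ to both sides, bounding the resulting coefficient $1+72/71=143/71$, and finally invoking Lemma~\ref{x+g-b+b} which says $|\gamma_n|+|\xi_n|\leq 9(|\beta^+_n(z_n^*)|+|\beta^-_n(z_n^*)|)$ for large $n$, I obtain
\[
|\gamma_n|+|\delta^{Neu}_n| \;\leq\; \tfrac{143}{71}\bigl(|\gamma_n|+|\xi_n|\bigr) \;\leq\; \tfrac{143}{71}\cdot 9\,(|\beta^+_n(z_n^*)|+|\beta^-_n(z_n^*)|),
\]
and $\tfrac{143}{71}\cdot 9 < 19$ yields the stated constant.

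There is no real obstacle here: all of the work was carried out in the preceding lemma (construction of the joint vector $G\in\mathrm{Dom}(L)\cap\mathrm{Dom}(L_{Neu})$, the Neumann test vector $g$, the key identity \eqref{dere}, and the lower bound $\langle G,\bar g\rangle\geq 71/72$ coming from Proposition~\ref{mainprop}), so the corollary is just the elementary arithmetic packaging of that identity with Lemma~\ref{x+g-b+b}. The only small care point is tracking the numerical constants so that the final coefficient falls under $19$.
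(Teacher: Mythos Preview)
Your argument is correct and follows exactly the same route as the paper: use the identity \eqref{dere} together with the bound \eqref{4950} and the trivial estimates $|b|\leq 1$, $|\langle \varphi_n,\bar g\rangle|,|\langle f_n,\bar g\rangle|\leq 1$ to get $|\delta^{Neu}_n|\leq \tfrac{72}{71}(|\gamma_n|+|\xi_n|)$, then feed this into Lemma~\ref{x+g-b+b}. The paper's proof is simply a terser statement of the same three steps.
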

\begin{proof}
Using (\ref{dere}), (\ref{4950}) and noting also that the absolute
values of $b$ and all inner products in the right-hand side of
(\ref{dere}) do not exceed $1$ we get $|\delta^{Neu}| \leq
72/71\big{(}|\xi|+|\gamma|\big{)}.$ This inequality, together with
Lemma \ref{x+g-b+b}, implies (\ref{corr2}).
 \end{proof}

Corollary \ref{corollary} proves the second inequality in
(\ref{A10}). In order to complete the proof of Theorem~\ref{Asymp} it
remains to prove the first inequality in (\ref{A10}).

By Proposition 34 in \cite{DMH}, if
 \be \label{case1}
  \textit{Case 1}: \quad \quad \quad \frac{1}{4}|\beta^-(z^+)| \leq
 |\beta^+(z^+)| \leq 4 |\beta^-(z^+)|,\quad\quad\quad\quad\quad
 \ee
 then we have
 \be
 \label{case11}
 |\beta^+(z^*)|+|\beta^-(z^*)| \leq 2|\gamma|.\quad \quad
 \ee
 Next we consider the complementary cases
$$
 \textit{Case 2(a)}:\quad 4|\beta^+(z^+)| < |\beta^-(z^+)|\quad
 \text{or} \quad \textit{Case 2(b)}:\quad 4|\beta^-(z^+)| <
 |\beta^+(z^+)|.$$
\begin{Lemma}
\label{LLL} If {\em Case 2(a)} or {\em Case 2(b)} holds, then we
have, for sufficiently large $n,$
 \be \label{LLLL}
\frac{1}{4} \leq \frac{|w(0)|}{|u(0)|} \leq 4.
 \ee
 \end{Lemma}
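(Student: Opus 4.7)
The plan is to apply Proposition \ref{mainprop} to reduce $|w(0)|$ and $|u(0)|$ to the derivatives at zero of the free projections $f_n^0 := P_n^0 f_n$ and $\phii_n^0 := P_n^0 \phii_n$, and then to use the Lyapunov--Schmidt eigenvector structure on $\mathcal{E}_n^0$ to show that both $|{f_n^0}'(0)|$ and $|{\phii_n^0}'(0)|$ are bounded between absolute positive multiples of $n$ in Cases 2(a)/(b), so their ratio lies in $[1/4,4]$.

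Applying Proposition \ref{mainprop} to the unit vectors $f_n$ and $\phii_n$ yields
\begin{equation}
|w(0) - {f_n^0}'(0)| \leq n\kappa_n, \qquad |u(0) - {\phii_n^0}'(0)| \leq n\kappa_n.
\end{equation}
Expanding $f_n^0 = a_1 e^{inx} + a_2 e^{-inx}$ and $\phii_n^0 = b_1 e^{inx} + b_2 e^{-inx}$ in $\mathcal{E}_n^0$, this becomes $|w(0)| = n|a_1 - a_2| + O(n\kappa_n)$ and $|u(0)| = n|b_1 - b_2| + O(n\kappa_n)$. Proposition \ref{Proposition} gives $\|f_n - f_n^0\|, \|\phii_n - \phii_n^0\| \leq \varepsilon_n$, so together with the orthonormality of $\{f_n, \phii_n\}$ the pairs $(a_1, a_2), (b_1, b_2) \in \mathbb{C}^2$ are nearly orthonormal. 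It therefore suffices to show that $|a_1 - a_2|/|b_1 - b_2|$ stays bounded away from $0$ and $\infty$ in Case 2.

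The key input is the Lyapunov--Schmidt description: up to an $O(\varepsilon_n)$ error, $f_n^0$ coincides with the normalized $z^+$-eigenvector of the matrix $\begin{pmatrix} \alpha & \beta^+ \\ \beta^- & \alpha \end{pmatrix}$ in the basis $\{e^{inx}, e^{-inx}\}$, which is proportional to $(\sqrt{\beta^+}, \sqrt{\beta^-})$. In Case 2(a), the assumption $4|\beta^+(z^+)| < |\beta^-(z^+)|$ yields $|a_1|/|a_2| < 1/2$, so $|a_1 - a_2| \in [|a_2|/2, 3|a_2|/2]$. Near-orthonormality forces $|b_1| \approx |a_2|$ and $|b_2| \approx |a_1|$ (with appropriate phases), so $|b_1 - b_2| \approx |a_1 + a_2|$ also lies in $[|a_2|/2, 3|a_2|/2]$. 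Hence $|a_1 - a_2|/|b_1 - b_2| \in [1/3, 3]$ up to $o(1)$ errors, and Case 2(b) is symmetric. Combined with the reduction step, we conclude $|w(0)|/|u(0)| \in [1/4, 4]$ for sufficiently large $n$.

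The main obstacle is quantitatively identifying $f_n^0$ with the $z^+$-eigenvector of the Lyapunov--Schmidt matrix: in Case 2 this matrix is close to a non-trivial Jordan block, so its two eigenvectors are nearly parallel. Care is required because $\phii_n$ is \emph{not} the $\lambda_n^-$-eigenfunction of $L|_{\mathcal{E}_n}$---it is the orthogonal complement of $f_n$---so $\phii_n^0$ corresponds to the Fourier-orthogonal of $f_n^0$ rather than to the nearly-parallel $z^-$-eigenvector. Combining the estimates $\|P_n - P_n^0\| \leq \varepsilon_n$, the explicit eigenvector formulas, and the strict inequality $|\beta^+|/|\beta^-| < 1/4$ into a consistent quantitative statement is the principal technical task.
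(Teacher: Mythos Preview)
Your plan is essentially the paper's own argument: both reduce $w(0),u(0)$ to ${f^0}'(0),{\phii^0}'(0)$ via Proposition~\ref{mainprop}, then use the Case~2 hypothesis on $\beta^\pm$ together with the Lyapunov--Schmidt eigenvector description to bound the Fourier coefficients of $f^0,\phii^0$, and finish with the elementary ratio estimate. The only difference is that the paper does not re-derive the coefficient bounds you sketch in your second and third paragraphs; it simply quotes the ready-made inequalities
\[
|f^0_1|\geq \tfrac{2}{\sqrt{5}}-2\rho_n,\quad |f^0_2|\leq \tfrac{1}{\sqrt{5}},\quad
|\phii^0_1|\leq \tfrac{1}{\sqrt{5}}+\rho_n,\quad |\phii^0_2|\geq \tfrac{2}{\sqrt{5}}-2\rho_n
\]
from \cite[Lemma~64, (4.51)--(4.55)]{DM15} (valid in $H^{-1}$ by \cite[Lemma~6, Prop.~44]{DMH}), which immediately give $n/\sqrt{6}\leq |{f^0}'(0)|,\,|{\phii^0}'(0)|\leq \sqrt{2}\,n$. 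So what you flag as ``the principal technical task'' is exactly what those cited results supply; invoking them turns your outline into a complete proof.
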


\begin{proof} We consider only  \textit{Case 2(a)}, since the proof \textit{Case 2(b)} is
 similar. Let $f^0=P^0f$, $\phii^0=P^0\phii$ and let
$ f^0=f^0_1 e^{inx}+f^0_2 e^{-inx} $ and $ \phii^0=\phii^0_1
e^{inx}+\phii^0_2 e^{-inx}. $
 In \textit{Case 2(a)}, if $v \in L^2([0,\pi])$
 it was shown in the proof of Lemma 64 in \cite{DM15} that the following
 inequalities hold (inequalities (4.51), (4.52), (4.54), and (4.55) in \cite{DM15}):
 \be
 \label{6-f}
 |f^0_1| \geq \frac{2}{\sqrt{5}}-2\rho_n,
 \quad|f^0_2| \leq \frac{1}{\sqrt{5}},\quad |\phii^0_1| \leq \frac{1}{\sqrt{5}}+\rho_n,
 \quad|\phii^0_2|\geq
 \frac{2}{\sqrt{5}}-2\rho_n,
 \ee
where $\rho_n$ is a sequence converging to zero. These
inequalities were derived using Lemma 21 and Proposition 11 in
\cite{DM15} which still hold in the case where $v \in
H^{-1}([0,\pi])$, (see Lemma
6 and Proposition 44 in \cite{DMH}) \footnote{In the derivation of the inequalities (\ref{6-f}), Proposition 44 in \cite{DMH} is needed for its corollary (\ref{P-P0}). So one can directly use (\ref{P-P0}) instead of Proposition 44 in \cite{DMH} to show (\ref{6-f}) hold.}. Hence we can safely use them.

Note that ${f^{0\hspace{0.3mm}}}'(0)=in(f^0_1-f^0_2)$. Using (\ref{6-f}) we get
 \be
 |{f^{0\hspace{0.3mm}}}'(0)| \geq n(|f^0_1|-|f^0_2|) \geq
 n\bigg{(}\frac{1}{\sqrt{5}}-2\rho_n\bigg{)} \geq
 \frac{n}{\sqrt{6}}
 \ee
for sufficiently large $n$. On the other hand we have
 \be
 |{f^{0\hspace{0.3mm}}}'(0)| \leq n(|f^0_1|+|f^0_2|) \leq n\sqrt{2}\|f^0\| \leq
 \sqrt{2}n
 \ee
Following the same argument for ${\phii^{0\hspace{0.3mm}}}'(0)$, we have both
 \be
  \frac{n}{\sqrt{6}} \leq |{f^{0\hspace{0.3mm}}}'(0)| \leq \sqrt{2}n\quad\text{and}\quad
  \frac{n}{\sqrt{6}} \leq |{\phii^{0\hspace{0.3mm}}}'(0)| \leq \sqrt{2}n.
 \ee
On the other hand by Proposition \ref{mainprop} we have
 \be
 |w(0)-{f^{0\hspace{0.3mm}}}'(0)| \leq n\kappa_n \quad \text{and} \quad |u(0)-{\phii^{0\hspace{0.3mm}}}'(0)| \leq n\kappa_n.
 \ee
 Hence, for sufficiently large $n$'s, we get
 \be
 \frac{|w(0)|}{|u(0)|} \leq
 \frac{|{f^{0\hspace{0.3mm}}}'(0)|+n\kappa_n}{|{\phii^{0\hspace{0.3mm}}}'(0)|-n\kappa_n}\leq
 \frac{n(\sqrt{2}+\kappa_n)}{n(1/\sqrt{6}-\kappa_n)}\leq 4
 \ee
 and
 \be
 \frac{|w(0)|}{|u(0)|} \geq
 \frac{|{f^{0\hspace{0.3mm}}}'(0)|-n\kappa_n}{|{\phii^{0\hspace{0.3mm}}}'(0)|+n\kappa_n}\geq
 \frac{n(1/\sqrt{6}-\kappa_n)}{n(\sqrt{2}+\kappa_n)}\geq \frac{1}{4}.
 \ee
 \end{proof}

\begin{Proposition}
 \label{prop2}
For sufficiently large $n$, we have
 \be
 \label{corr} \big{(}|\beta^+_n(z_n^*)|+|\beta^-_n(z_n^*)|\big{)} \leq
 80 (|\gamma_n|+|\delta^{Neu}_n|)
 \ee
 \end{Proposition}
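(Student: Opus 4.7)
The plan is to argue via the dichotomy already introduced before Lemma~\ref{LLL}. In \textit{Case 1}, the estimate (\ref{case11}) gives $|\beta_n^+(z_n^*)|+|\beta_n^-(z_n^*)| \leq 2|\gamma_n|$, which is far stronger than (\ref{corr}); nothing remains in that case. All the work lies in \textit{Cases 2(a)/2(b)}, where I will combine the identity (\ref{dere}), the ratio control $1/4 \leq |w(0)|/|u(0)| \leq 4$ from Lemma~\ref{LLL}, and the asymmetry of the free projections of $f$ and $\varphi$ recorded in (\ref{6-f}) to solve for $\xi_n$ in terms of $\gamma_n$ and $\delta^{Neu}_n$.

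Since the construction $\tilde{G} = u(0) f - w(0)\varphi$, $G = \tilde{G}/\|\tilde{G}\|$ gives $G = af+b\varphi$ with $|b|/|a| = |w(0)|/|u(0)|$, the ratio bound together with $|a|^2+|b|^2 = 1$ forces $|a|, |b| \geq 1/\sqrt{17}$ in Case~2. Next, to bound $|\langle f,\bar{g}\rangle|$ from below I decompose
\[
\langle f,\bar{g}\rangle = \langle f^0,\bar{g}^0\rangle + \langle f-f^0,\bar{g}\rangle + \langle f^0,\bar{g}-\bar{g}^0\rangle,
\]
in which the last two terms are $O(\varepsilon_n)$ by estimates in the spirit of (\ref{G-G0})--(\ref{g-g0}). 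Using the explicit form (\ref{g0}) of $\bar{g}^0$ and expanding $f^0 = f_1^0 e^{inx}+f_2^0 e^{-inx}$ gives
\[
\langle f^0,\bar{g}^0\rangle = \frac{\|\bar{g}^0\|\,e^{i\theta}}{\sqrt{2}}(f_1^0+f_2^0).
\]
In Case~2(a), (\ref{6-f}) yields $|f_1^0+f_2^0| \geq |f_1^0|-|f_2^0| \geq 1/\sqrt{5} - 2\rho_n$, hence $|\langle f,\bar{g}\rangle| \geq 1/\sqrt{11}$ for sufficiently large $n$; Case~2(b) is symmetric with the roles of $f$ and $\varphi$ exchanged.

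Isolating $\xi_n$ in (\ref{dere}) and using $|\langle\varphi,\bar{g}\rangle|, |\langle G,\bar{g}\rangle| \leq 1$, $|b| \geq 1/\sqrt{17}$, and $|\langle f,\bar{g}\rangle| \geq 1/\sqrt{11}$ then gives
\[
|\xi_n| \leq \frac{|\langle\varphi,\bar{g}\rangle|}{|\langle f,\bar{g}\rangle|}|\gamma_n| + \frac{|\langle G,\bar{g}\rangle|}{|b|\,|\langle f,\bar{g}\rangle|}|\delta^{Neu}_n| \leq \sqrt{11}\,|\gamma_n|+\sqrt{187}\,|\delta^{Neu}_n|.
\]
Lemma~\ref{x+g-b+b} converts this into $|\beta_n^+(z_n^*)|+|\beta_n^-(z_n^*)| \leq 5(|\xi_n|+|\gamma_n|) \leq 5(1+\sqrt{11})|\gamma_n|+5\sqrt{187}\,|\delta^{Neu}_n|$, and since $5\sqrt{187}<80$ this yields (\ref{corr}).

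The main obstacle will be the lower bound on $|\langle f,\bar{g}\rangle|$ in Case~2, which needs three ingredients to align: the concentration of $f^0$ on exactly one of the two basis vectors $e^{\pm inx}$ (from (\ref{6-f})), the balanced weight of $\bar{g}^0$ on $e^{\pm inx}$ dictated by $\mathcal{C}^0 = \mathrm{Span}\{\cos nx\}$, and the smallness $\|P-P^0\|, \|P_{Neu}-P^0_{Neu}\| \leq \varepsilon_n$ from Proposition~\ref{Proposition} that lets the analysis on the free projections transfer back to $f$ and $\bar{g}$.
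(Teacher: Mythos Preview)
Your argument is correct. The overall architecture---dispose of Case~1 via (\ref{case11}), then in Case~2 isolate $\xi_n$ from (\ref{dere}) using a lower bound on $|b|\,|\langle f,\bar g\rangle|$, and finish with Lemma~\ref{x+g-b+b}---is exactly the paper's, but you diverge at the key step of bounding $|\langle f,\bar g\rangle|$ from below. The paper writes
\[
|b|\,|\langle f,\bar g\rangle|\;=\;|b|\,\bigl|\langle f,G\rangle+\langle f,\bar g-G\rangle\bigr|\;\ge\;|a|\,|b|-\|\bar g-G\|,
\]
then uses (\ref{4950}) to get $\|\bar g-G\|\le 1/6$ and the ratio bound to get $|a|\,|b|\ge 4/17$, so $|b|\,|\langle f,\bar g\rangle|>1/15$ and $|\xi|\le 15(|\delta^{Neu}|+|\gamma|)$. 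You instead bound $|\langle f,\bar g\rangle|$ directly from the free projections and the asymmetry (\ref{6-f}), obtaining $|\langle f,\bar g\rangle|\ge 1/\sqrt{11}$, and pair this with $|b|\ge 1/\sqrt{17}$. The paper's route is a bit slicker in that it recycles (\ref{4950}) and the identity $\langle f,G\rangle=\bar a$ rather than re-invoking (\ref{6-f}); your route, on the other hand, yields slightly sharper constants $5(1+\sqrt{11})$ and $5\sqrt{187}$ on $|\gamma_n|$ and $|\delta_n^{Neu}|$ respectively, both under $80$. Two small remarks: in your formula for $\langle f^0,\bar g^0\rangle$ the phase should be $e^{-i\theta}$ rather than $e^{i\theta}$ (irrelevant for the modulus), and your appeal to ``symmetry'' in Case~2(b) is really the observation that the analogue of (\ref{6-f}) holds with the roles of the Fourier indices $1$ and $2$ swapped, so $|f_1^0+f_2^0|\ge\bigl||f_1^0|-|f_2^0|\bigr|\ge 1/\sqrt{5}-2\rho_n$ still holds by the reverse triangle inequality.
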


\begin{proof} In view of
 (\ref{case11}), it remains to prove (\ref{corr}) if
 \textit{Case 2(a)} or \textit{Case 2(b)} holds.

 Now (\ref{dere}) implies that
 \be \label{yokoyleyok}
 |b||\langle f,\bar{g}\rangle | |\xi| \leq |
 \delta^{Neu}|+|\gamma|.
 \ee
 Thus, in order to estimate $|\xi|$ from above by
 $|\delta^{Neu}|+|\gamma|$ we need to
 find a lower bound to $|b||\langle f,\bar{g}\rangle |$. We have
 \be
 \label{aydiaydi}
 |b||\langle f,\bar{g}\rangle |=|b|\big{|}\langle f,G \rangle+\langle f,\bar{g}-G
 \rangle\big{|}\geq |a||b|-\|\bar{g}-G\|
 \ee
since $\|f\|=1$, $|b| \leq 1$ and $\langle f,G \rangle= \bar{a}.$ In
view of (\ref{4950})
 \be
\|\bar{g}-G\|^2=\|\bar{g}\|^2+\|G\|^2-2 Re \,
\langle\bar{g},G\rangle=2-2\langle\bar{g},G\rangle \leq
\frac{1}{36},
 \ee
  hence
 \be \label{uwl} \|\bar{g}-G\| \leq\frac{1}{6}. \ee
On the other hand, by the construction of $G$ we know $|b/a|=
|w(0)/u(0)|,$ so Lemma \ref{LLL} implies that $1/4\leq |b/a|
\leq 4.$ Since $|a|^2+|b|^2=1$, a standard calculus argument shows
that
 \be \label{sar1}
 |a||b| \geq\frac{4}{17}.
 \ee
In view of (\ref{uwl}) and (\ref{sar1}), the right-hand side of
(\ref{aydiaydi}) is not less than $4/17-1/6 > 1/15,$ i.e.,
$|b||\langle f,\bar{g}\rangle |>1/15.$ Hence, by (\ref{yokoyleyok}),
it follows that
 \be \label{bsbsb}
 |\xi| \leq 15 \big{(} |\delta^{Neu}|+|\gamma| \big{)}.
 \ee
Now we complete the proof combining (\ref{bsbsb}) and Lemma
\ref{x+g-b+b}. \end{proof}

Corollary \ref{corollary} and Proposition \ref{prop2} show that
(\ref{A10}) holds, so Theorem~\ref{Asymp} is proved.

\section*{Acknowledgement}
The author wishes to express his gratitude to Prof. Plamen Djakov
who suggested the problem and offered invaluable assistance, support
and guidance.

\end{document}